\newcommand\R{\mathbb{R}}
\newcommand\C{\mathbb{C}}
\newcommand{\enstq}[2]{\left\{#1~\middle|~#2\right\}}
\newcommand\Div{\mathrm{div}}
\newcommand\eps{\varepsilon}
\renewcommand{\Re}{\operatorname{Re}}
\renewcommand{\Im}{\operatorname{Im}}
\newcommand\kin{\mathrm{kin}}
\newcommand\ent{\mathrm{ent}}
\newcommand\loc{\mathrm{loc}}
\theoremstyle{plain}
\newtheorem{theorem}{Theorem} [section]
\newtheorem{lemma}[theorem]{Lemma}
\newtheorem{proposition}[theorem]{Proposition}
\theoremstyle{remark}
\newtheorem{remark}[theorem]{Remark}
\theoremstyle{definition}
\newtheorem{definition}[theorem]{Definition}
\newtheorem{assumption}[theorem]{Assumption}
\numberwithin{equation}{section}
\title{Dynamics of the Schrödinger-Langevin equation}
\author{Quentin Chauleur}
\address{Univ Rennes, CNRS, IRMAR - UMR 6625, F-35000 Rennes, France}
\email{quentin.chauleur@ens-rennes.fr}
\begin{document}

\maketitle

\begin{abstract}
We consider the nonlinear Schrödinger-Langevin equation for both signs of the logarithmic nonlinearity. We explicitly compute the dynamics of Gaussian solutions for large times, which is obtained through the study of a particular nonlinear differential equation of order 2. We then give the asymptotic behavior of general energy weak solutions under some regularity assumptions. Some numerical simulations are performed in order to corroborate the theoretical results.
\end{abstract}

\tableofcontents

\section{Introduction}

We consider the following nonlinear Schrödinger equation:
\begin{equation}  i  \partial_t \psi + \frac{1}{2}  \Delta \psi = \lambda \psi \log(|\psi|^2) + \frac{1}{2i}  \mu  \psi \log\left( \frac{ \psi}{\psi^*} \right),  \label{NMeq} \end{equation} 
with $\psi(0,x)=\psi_0(x)$, $x \in \R^d$, $t \in \R$, $\mu > 0$ and $\lambda \in \R^*$. This equation first appears in Nassar's paper \cite{nassar1985} as a possible way to give a stochastic interpretation of quantum mechanics in the context of Bohmian mechanics. It had a recent renewed interest in the physics community, in particular in quantum mechanics in order to describe the continuous measurement of the position of a quantum particle (see for example \cite{nassar}, \cite{zander} or \cite{mousavi2019}) and in cosmology and statistical mechanics (see \cite{chavanis2017}, \cite{chavanis2019cosmo} or \cite{chavanis2019stat}). Note that in its physical interpretation, $\lambda=2 k_B T/ \hbar$ corresponds to a quantum friction coefficient, so both positive and negative signs could be of interest ($k_B$ and $\hbar$ denotes respectively the Boltzmann and the normalized Planck constant, and $T$ is an effective temperature), unlike the real friction coefficient $\mu$ which is taken positive (see \cite{chavanis2017}).

The nonlinear potential $\frac{1}{2i}\log\left( \psi/\psi^* \right)$, which could be seen as the argument of the wave function $\psi$, is not defined when $\psi$ is equal to zero. In order to circumvent this difficulty, we will rather consider the fluid formulation of equation \eqref{NMeq}. Plugging the Madelung transform  $\psi = \sqrt{\rho}e^{iS/\eps}$, or in a more rigorous way the change of unknown functions $\rho = |\psi|^2$ and $J= \Im(\psi^* \nabla \psi)$, we get the following quantum hydrodynamical system

\begin{subequations} \label{hydro}
\begin{empheq}[left=\empheqlbrace]{align}
& \partial_t \rho + \Div J =0 \label{continuity}  \\
& \partial_t J + \Div \left( \frac{J \otimes J}{\rho} \right) + \lambda \nabla \rho + \mu J= \frac{1}{2} \rho \nabla \left( \frac{\Delta \sqrt{\rho}}{\sqrt{\rho}} \right), \label{fluid}
\end{empheq} 
\end{subequations}
where 
\[\Div \left( \frac{J \otimes J}{\rho} \right) = \left( \sum_{j=1}^d \frac{\partial}{\partial x_j} \left( \frac{ J_i J_j}{\rho} \right)    \right)_{1 \leq i \leq d} \]
denoting $J=(J_i)_{1 \leq i \leq d}$.
We note that the last nonlinear term of equation \eqref{fluid}, usually called the Bohm potential, could also be expressed as
\begin{equation} \label{bohm_potential}    
 \frac{1}{2} \rho \nabla \left( \frac{\Delta \sqrt{\rho}}{\sqrt{\rho}} \right)= \frac{1}{4} \Delta \nabla \rho - \Div ( \nabla \sqrt{\rho} \otimes \nabla \sqrt{\rho})    
 \end{equation}
under some regularity assumptions, and its study is at the heart of the Bohmian dynamics theory (see \cite{teufel2009}, \cite{sparber2014} and \cite{wyatt2003}). 

Equations \eqref{continuity}-\eqref{fluid} stand as an isothermal fluid system with an additional dissipation term. In \cite{carles_rigidity}, the study of the isothermal Euler-Korteweg in the case $\lambda >0$ and $\mu = 0$ reveals an asymptotic vanishing Gaussian profile for every rescaled global weak solution with well-prepared initial data. This property, rather unusual in the hydrodynamic setting, is a direct consequence of the link with the defocusing logarithmic Schrödinger equation \cite{carles2018}, namely 
\begin{equation} \label{logNLS}
 i  \partial_t \psi + \frac{1}{2}  \Delta \psi = \lambda \psi \log(|\psi|^2).  
\end{equation} 
On the contrary, in the focusing case (which is way more studied in the mathematical literature, see \cite{cazenave} or \cite{ardila2016}), the existence and stability of periodic non-dispersive solutions, called solitons, is proved and well-understood. The existence of shape-moving periodic solutions, called \textit{breathers}, is also known \cite{ferriere2019}, but no stability results has yet been proved. Let us finally note that the potential energy of this equation,
\[ \lambda \int_{\R^d} |\psi|^2 \log |\psi|^2, \]
has no definite sign regardless of the sign of $\lambda$, so the focusing or defocusing behavior of its solutions may not be clear at first glance. 

The question of the existence of solutions to this kind of quantum system is already dealt with in the case of barotropic pressure of the form $P(\rho)= \lambda \rho^{\gamma}$ in \cite{antonelli2009}, where $\gamma >1$. However, the proof is based on the link with the power-like Schrödinger equation 
\begin{equation} \label{powerNLS}
i  \partial_t \psi + \frac{1}{2}  \Delta \psi = \lambda \psi |\psi|^{\gamma-1}  
\end{equation}
and the use of Strichartz estimates which do not seem to be helpful for the logarithmic nonlinearity. The global existence of weak solutions of equations \eqref{continuity}-\eqref{fluid} will be the aim of a forthcoming paper. We will concentrate here on the study of the large-time behavior of solutions.

This paper is organized as follows. In Section 2, we provide energy estimates, assumptions about existence and regularity of solutions of equations \eqref{continuity}-\eqref{fluid}, and we state the main results of this paper. In Section 3, we explicitly compute the behavior of Gaussian solutions of \eqref{NMeq} for both the focusing ($\lambda <0$) and the defocusing ($\lambda >0$) case, which will be crucial to the study of the universal dynamics of our solutions. In Section 4 we prove the theorems stated in Section 2. Section 5 is devoted to the simulation of numerical trajectories of solutions of \eqref{NMeq}, in order to illustrate our results and comfort our initial assumptions.

\section{First properties and main results}

Equation \eqref{continuity} formally induces some mass conservation for all $t \geq 0$:
\begin{equation} \label{mass_conservation}
 \| \rho(t)\|_{L^1(\R^d)} = \| \rho_0 \|_{L^1(\R^d)}
\end{equation}

Note that system \eqref{continuity}-\eqref{fluid} can also be written in terms of unknown function $u=J/\rho$, which is more convenient in order to express the following energy estimate, which holds for all $t \geq 0$:
\begin{equation}  E(t) +  \mu \int_0^t D(s) ds \leq E_0, \label{eq_energy} \end{equation}
where
\begin{gather}
    E(t)=E(\rho,u)= E_c(t)+E_p(t), \label{energy} \\ 
    E_c(t) = \frac{1}{2} \int_{\R^d}  \rho |u|^2  dx ,\label{energy_cin}\\
    E_p(t)=  \int_{\R^d} \left( \frac{1}{2} |\nabla \sqrt{\rho} |^2 + \lambda \rho\log \rho \right)dx,\label{energy_pot} \\
    D(s)=D(\rho,u) =  \int_{\R^d} \rho |u|^2 dx = 2 E_c(s), \label{dissipation}
\end{gather}
and $E_0:=E(0)$. We note that equation \eqref{NMeq} has a conserved $L^2$-norm but a dissipated energy, which is a typical feature in the context of fluid dynamics but a quite unusual one in the framework of nonlinear Schrödinger equations. In particular, for the focusing case $\lambda <0$, the dissipation term in estimate \eqref{eq_energy} implies that $E_c \in L^1(\R)$. In fact, this is a direct consequence of the mass conservation \eqref{mass_conservation} and the logarithmic Sobolev inequality 
\begin{equation} \label{log_sobo_ineq}
 \int \rho \log \rho \leq \frac{\alpha^2}{\pi} \| \nabla \sqrt{\rho} \|^2_{L^2} + \left(\log \| \rho \|_{L^1} - d \left(1+\log \alpha \right) \right) \| \rho \|_{L^1}  
\end{equation}
which holds for any $\alpha >0$ and for any function $\rho \in W^{1,1}(\R^d) \backslash\left\{ 0 \right\}$ (for a proof of this inequality we refer to \cite{lieb_loss}). Note that $E_c \in L^1(\R)$ still holds in the defocusing case $\lambda >0$ under some regularity assumptions (see Remark \ref{rem_defocusing} below).\\ 
A first consequence of this important property is that the center of mass of every solution converges, which could be seen through the formal integration by part of equation \eqref{fluid} (using the alternative formulation of the Bohm potential \eqref{bohm_potential}),
\[ \frac{d}{dt} \left(\int_{\R^d}  \rho(t,x) u(t,x) dx \right)= \int_{\R^d}  \partial_t (\rho(t,x) u(t,x)) dx = - \mu  \int_{\R^d}  \rho(t,x) u(t,x) dx,  \]
and of equation \eqref{continuity},
\begin{equation*} 
\frac{d}{dt} \left(\int_{\R^d} x \rho(t,x) dx \right)= \int_{\R^d} x  \partial_t \rho(t,x) dx = - \int_{\R^d} \rho(t,x) u(t,x) dx = - e^{- \mu t} \int_{\R^d} \rho_0(x) u_0(x) dx,
\end{equation*} 
so finally
\begin{equation} \label{center_mass_conv}
\int_{\R^d} x \rho(t,x) dx \underset{t\to+\infty}{\rightarrow} \int_{\R^d} x \rho_0(x) dx - \frac{1}{\mu}  \int_{\R^d} \rho_0(x) u_0(x) dx.
\end{equation} 

As for the logarithmic Schrödinger equation, an important feature of \eqref{NMeq} is that the evolution of initial Gaussian data remains Gaussian (see \cite{birula1976} and \cite{carles2018}). In order to shorten the calculations, we may consider centered Gaussian initial data (cf Remark \ref{moving_gaussian_remark} for the behavior of moving-center Gaussian). The following asymptotic result for Gaussian functions will be a crucial guide for the general case, and its proof will be the aim of Section 3. 

\begin{theorem}\textbf{(Gaussian behavior).} \\
Let $\lambda \in \R^*$, and consider the initial data 
\[ \psi_0(t,x) = b_0 \exp \left(-\frac{1}{2} \sum_{j=1}^d a_{0j} x_j^2 \right)  \]
with $b_0$, $a_{0j} \in \C$, $\alpha_{0j} = \Re a_{0j} >0$. Then the solution $\psi$ to \eqref{NMeq} is given by
\[  \psi(t,x)= b_0 \prod_{j=1}^d \frac{1}{\sqrt{r_j(t)}} \exp \left( i \phi_j(t)-\alpha_{0j} \frac{x_j^2}{2r_j^2(t)} + i \frac{\dot{r}_j(t)}{r_j(t)} \frac{x_j^2}{2}  \right)    \]
for some real-valued functions $\phi_j$, $r_j$ depending only on time, such that, as $t \rightarrow \infty$,
\begin{itemize}
  \item[$\bullet$] if $\lambda <0$,
  \[  r_j(t) \rightarrow \sqrt{\frac{ \alpha_{0j}}{-2 \lambda}}  \ \ \ \text{and} \ \ \ \dot{r}_j(t) \rightarrow 0, \]
  so the Gaussian solution $\psi$ tends to a universal Gaussian profile, namely
\[ | \psi(t,x) | \underset{t\to+\infty}{\longrightarrow} C e^{\lambda |x|^2} \ \ \ \text{for all } x \in \R^d,  \]
where $C=C(\lambda,\alpha_0,b_0,d) > 0$ denotes a generic constant depending only on $\lambda$, $\| \psi_0 \|_{L^2(\R^d)}$ and the dimension $d$, 
  \item[$\bullet$] and if $\lambda >0$, for large $t$,
 \[  r(t) \sim 2\sqrt{\frac{\lambda \alpha_0}{\mu}t} \ \ \ \text{and} \ \ \ \dot{r}(t) \sim \sqrt{\frac{\lambda \alpha_0}{\mu t}}, \]
 so that
 \[ \| \psi(t) \|_{L^{\infty}} = \hat{C} t^{-d/4} \ \ \ \text{and} \ \ \ \| \nabla \psi(t) \|_{L^2} \sim  \tilde{C} \frac{1}{\sqrt{t}}.  \]
 where $\hat{C}=\hat{C}(\alpha_0,b_0,\mu,\lambda,d)$ and $\tilde{C}=\tilde{C}(\alpha_0,b_0,\mu,\lambda,d) >0$ denote two generic constants depending only on $\lambda$, $\mu$, the initial conditions and the dimension $d$.
\end{itemize} 
\end{theorem}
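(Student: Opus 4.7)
\emph{Proof plan.} The strategy is to reduce \eqref{NMeq}, on the Gaussian ansatz, to a scalar second-order ODE for each scale parameter $r_j$, and then to analyze that ODE in the two regimes.

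\textbf{Step 1: reduction to an ODE.} Writing $\psi = A e^{iS}$ with $A,S$ real, the key simplification is the algebraic identity $\frac{1}{2i}\log(\psi/\psi^*) = S$, which turns the last term in \eqref{NMeq} into the smooth multiplication operator $\mu S \psi$. The standard Madelung calculation then applies verbatim: the imaginary part recovers the continuity equation for $\rho=A^2$, and the real part gives the Hamilton--Jacobi-type equation
\[ -\partial_t S - \tfrac{1}{2}|\nabla S|^2 + \tfrac{1}{2}\Delta A / A = \lambda \log \rho + \mu S. \]
On the ansatz of the theorem, both $\log\rho$ and $S$ are polynomials of degree $2$ in $x$ with purely time-dependent coefficients, so matching coefficients of $x_j^2$ yields, after simplification,
\[ \ddot r_j + \mu \dot r_j = \frac{\alpha_{0j}^2}{r_j^3} + \frac{2\lambda\alpha_{0j}}{r_j}, \qquad j = 1,\dots, d, \]
with initial data $r_j(0)=1$ and $\dot r_j(0) = -\Im a_{0j}$, while the $x$-independent terms determine $\phi_j$ by a single quadrature once $r_j$ is known. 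The whole theorem therefore reduces to the asymptotic analysis of the above decoupled scalar ODE.

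\textbf{Step 2: a Lyapunov functional.} Both regimes are driven by the energy
\[ \mathcal E_j(t) := \tfrac{1}{2}\dot r_j(t)^2 + V_\lambda(r_j(t)), \qquad V_\lambda(r) := \frac{\alpha_{0j}^2}{2r^2} - 2\lambda \alpha_{0j}\log r, \]
which satisfies $\frac{d}{dt}\mathcal E_j = -\mu \dot r_j^2 \leq 0$. In the focusing case $\lambda<0$, $V_\lambda$ is strictly convex and coercive on $(0,+\infty)$ with unique minimum at $r_j^* = \sqrt{\alpha_{0j}/(-2\lambda)}$; monotonicity of $\mathcal E_j$ traps $r_j$ in a compact sub-interval of $(0,+\infty)$, and LaSalle's invariance principle applied to the autonomous planar system $(r_j,\dot r_j)$ forces convergence to the unique equilibrium $(r_j^*,0)$. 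Plugging this limit into the ansatz immediately gives $|\psi(t,x)| \to C e^{\lambda |x|^2}$, as announced.

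\textbf{Step 3: the defocusing regime and the main obstacle.} When $\lambda>0$, $V_\lambda$ has no critical point: $V_\lambda(r)\to-\infty$ as $r\to+\infty$ while $V_\lambda(r)\to+\infty$ as $r\to 0^+$. The bound $\mathcal E_j \leq \mathcal E_j(0)$ still yields a uniform lower bound $r_j \geq c > 0$; the dissipation identity integrates to $\mu\int_0^\infty \dot r_j^2\,dt < \infty$, and a standard $\omega$-limit argument (using the absence of equilibria for any finite limit point) shows $r_j\to+\infty$. The delicate part, and the main obstacle of the proof, is to upgrade this qualitative blow-up to the sharp rate $r_j(t)\sim 2\sqrt{\lambda\alpha_{0j}t/\mu}$. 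My plan is to introduce the self-similar variable $R_j(\tau) := e^{-\tau/2} r_j(e^\tau)$, in which the ODE becomes, for large $\tau$, a small perturbation of a one-dimensional gradient flow with unique globally attracting fixed point $R_j^\star = 2\sqrt{\lambda\alpha_{0j}/\mu}$ (the $\alpha_{0j}^2/r_j^3$ term decays faster and is asymptotically negligible). A second Lyapunov estimate on the rescaled dynamics then pins the limit $R_j(\tau) \to R_j^\star$, yielding the prescribed $\sqrt{t}$-asymptotics for $r_j$ and, by differentiation, for $\dot r_j$. The $L^\infty$ and $\dot H^1$ bounds on $\psi$ are then direct Gaussian integrations using these rates. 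Extracting the sharp power-law rather than a mere $o(t)$ bound is the technical heart of the argument, since no contraction or spectral gap is available in the absence of an equilibrium.
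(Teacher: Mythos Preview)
Your Steps 1 and 2 track the paper closely. The paper plugs the Gaussian ansatz directly into \eqref{NMeq} (rather than going through Madelung) and arrives at the same scalar ODE $\ddot r+\mu\dot r=\alpha_0^2/r^3+2\lambda\alpha_0/r$; for $\lambda<0$ it argues by hand (two-sided bounds on $r$, uniform continuity of $\dot r^2$ together with $\int\dot r^2<\infty$, then $\ddot r\to 0$ and passage to the limit in the ODE) where you invoke LaSalle. Your phrasing is a bit cleaner and buys exactly the same conclusion.

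In Step 3 there is a genuine error. You assert that the dissipation identity integrates to $\mu\int_0^\infty \dot r_j^2\,dt<\infty$ in the defocusing case. This is false: when $\lambda>0$ the potential $V_\lambda(r)\to-\infty$ as $r\to\infty$, so $\mathcal E_j$ is not bounded below, and indeed the target asymptotic $\dot r_j\sim C t^{-1/2}$ gives $\dot r_j^2\sim C^2/t\notin L^1$. Your $\omega$-limit argument for $r_j\to\infty$ therefore rests on a false premise. The paper avoids this: it first shows $\dot r>0$ for all $t>T_0$ by a direct ODE argument, so $r$ is eventually monotone; a finite limit $\ell$ is then ruled out by contradiction (and only under that contradiction hypothesis, with $r$ bounded, does one get $\int\dot r^2<\infty$, hence $\dot r\to 0$, hence $\ddot r\to \alpha_0^2/\ell^3+2\lambda\alpha_0/\ell>0$, which is impossible).

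For the sharp $\sqrt t$ rate the paper takes a more elementary route than your self-similar rescaling: its key technical lemma is that $\ddot r\le 0$ for all large $t$, which immediately yields $\mu\dot r\ge 2\lambda\alpha_0/r$ and hence the lower barrier $r(t)\ge 2\sqrt{\lambda\alpha_0 t/\mu}$; the matching upper bound then comes from integrating the ODE once and feeding this lower barrier back into the $1/r$ and $1/r^3$ terms. Your self-similar variable $R(\tau)=e^{-\tau/2}r(e^\tau)$ does produce an asymptotically autonomous scalar equation with global attractor $R^\star=2\sqrt{\lambda\alpha_0/\mu}$, and that route is viable, but you still owe the argument that the $O(e^{-\tau})$ perturbation terms do not spoil convergence (e.g.\ a Markus-type theorem on asymptotically autonomous flows, or a direct Gronwall estimate). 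One last caution: ``by differentiation'' is the wrong justification for the $\dot r$ asymptotic, since asymptotic equivalences do not survive differentiation; fortunately your change of variables already gives $\dot r=t^{-1/2}(\tfrac12 R+R')$, so once $R\to R^\star$ and $R'\to 0$ the rate for $\dot r$ follows without differentiating anything.
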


We now give the notion of weak solution we will use throughout this paper. Following \cite{antonelli2009}, we express:

\begin{definition} 
We say that $(\rho,J)$ is an \textbf{energy weak solution} of system \eqref{continuity}-\eqref{fluid} in $\left[ 0,T \right[ \times \R^d$ with initial data $(\rho_0,J_0) \in L^1(\R^d) \times  L^1(\R^d)$, if there exists locally integrable functions $\sqrt{\rho}$, $\Lambda$ such that, by defining $\rho := \sqrt{\rho}^2$ and $J=\sqrt{\rho} \Lambda$, the following holds:

\begin{enumerate}[label=(\roman*)]

    \item The global regularity:
    \[ \sqrt{\rho} \in L^{\infty}(\left[ 0,T \right[;H^1(\R^d)), \ \ \ 
      \Lambda  \in L^{2}(\left[ 0,T \right[;L^2(\R^d)),  \]
      with the compatibility condition
      \[ \sqrt{\rho} \geq 0 \text{ a.e. on } (0,\infty) \times \R^d, \ \ \ 
      \Lambda =0 \text{ a.e. on } \left\{ \rho=0 \right\}. \]
      
      \item For any test function $\eta \in \mathcal{C}_0^{\infty}(\left[ 0,T \right[ \times \R^d)$, 
      
      \[ \int_0^T \int_{\R^d} (\rho \partial_t \eta + J \cdot \nabla \eta )dx dt + \int_{\R^d} \rho_0 \eta(0) dx =0, \]
      and for any test function $\zeta \in \mathcal{C}_0^{\infty}(\left[ 0,T \right[ \times \R^d; \R^d)$,
      \begin{gather*} \int_0^T \int_{\R^d} \left( J \cdot \partial_t \zeta + \Lambda \otimes \Lambda : \nabla \zeta +\lambda \rho \Div (\zeta) - \mu J \cdot \zeta + \nabla \sqrt{\rho} \otimes \nabla \sqrt{\rho} : \nabla \zeta - \frac{1}{4} \rho \Delta \Div \zeta \right) dx dt \\ 
      +  \int_{\R^d} J_0 \zeta(0) dx =0. \end{gather*}
      
      \item (Generalized irrotationality condition) For almost every $t \geq 0$, 
      \[ \nabla \wedge J = 2 \nabla \sqrt{\rho} \wedge \Lambda \]
      holds in the sense of distributions.
      
      \item For almost every $t \in \left[0,T\right)$, equation \eqref{eq_energy} holds.

\end{enumerate}
\label{weak}
\end{definition}

In the focusing case, under some regularity and tightness assumption, we will show that every energy weak solution tends weakly in $L^1(\R^d)$ to a universal Gaussian profile of same mass. We denote 
\[ W(\R^d):=\enstq{ \psi \in H^1(\R^d)}{ |\psi|^2 \log |\psi|^2 \in L^1(\R^d)}.  \]

\begin{assumption} \textbf{(Regularity in the focusing case).} \\
Let $\lambda <0$ and $(\rho_0,J_0) \in L^1(\R^d) \times  L^1(\R^d)$. We assume there exists an energy weak solution $(\rho,J)$ of system \eqref{continuity}-\eqref{fluid} with initial data $(\rho_0,J_0)$ in the sense of Definition \ref{weak} with the additional regularity:
\[ \sqrt{\rho} \in L^{\infty}(\left[ 0,T \right[;H^2(\R^d) \cap W(\R^d)). \] \label{assumption_reg_focusing} 
\end{assumption}
\begin{remark}
The condition (iv) in Definition \ref{weak} insures the energy \eqref{energy} to be bounded by above, however the term $\int \rho \log \rho$ has no definite sign, hence the condition $\sqrt{\rho} \in L^{\infty}(\left[ 0,T \right[; W(\R^d))$ ensures that the energy is bounded from below. The regularity assumption on $H^2$ will be used in the following in order to perform the limit in equation \eqref{fluid}.
\end{remark}

\begin{assumption} \textbf{(Tightness in the focusing case).} \\
If $\lambda <0$, we assume that every energy weak solution $(\rho,J)$ of system \eqref{continuity}-\eqref{fluid} in the sense of Definition \ref{weak} has a bounded moment of order 2, namely
\[ \sup_{t\geq 0} \int_{\R^d} |x|^2 \rho(t,x) dx < \infty. \]
\label{assumption_tightness}
\end{assumption}

\begin{theorem}  \textbf{(Focusing case).} \\
If $\lambda <0$, under Assumption \ref{assumption_reg_focusing} and Assumption \ref{assumption_tightness}, and assuming $\rho_0 \neq 0$, we have 
\[ \rho(t,.) \underset{t\to+\infty}{\rightharpoonup} c_{\lambda} e^{\lambda |x-x_{\infty}|^2} \ \ \ \text{weakly in} \ L^1(\R^d),\]
where $ c_{\lambda} :=\| \rho_0 \|_{L^1(\R^d)} (-\lambda/\pi)^{d/2}$ and $x_{\infty}$ is determined by
\begin{equation} \label{x_infty}
   x_{\infty} \| \rho_0 \|_{L^1(\R^d)}  =
   \lim\limits_{\substack{t \to \infty}} \int_{\R^d} x \rho(t,x) dx =\int_{\R^d} x \rho_0(x) dx - \frac{1}{\mu}  \int_{\R^d} \rho_0(x) u_0(x) dx
\end{equation}
using \eqref{center_mass_conv}.
\label{theorem1}
\end{theorem}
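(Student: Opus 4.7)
The plan is to follow a rigidity-type argument: use the dissipation to extract times along which the kinetic energy vanishes, pass to a stationary hydrodynamic equation, and classify its admissible solutions as translated Gaussians. Combining the dissipation identity \eqref{eq_energy} with the logarithmic Sobolev inequality \eqref{log_sobo_ineq} (applied with $\alpha$ small enough that $1/2+\lambda\alpha^2/\pi>0$) and mass conservation \eqref{mass_conservation}, one derives $\sup_{t\geq 0}\|\sqrt\rho(t)\|_{H^1}<\infty$ and $E_c\in L^1(\R_+)$. The latter provides a sequence $t_n\to+\infty$ along which $E_c(t_n)\to 0$, so $\Lambda(t_n)\to 0$ in $L^2(\R^d)$ and, by Cauchy--Schwarz and the mass bound, $J(t_n)\to 0$ in $L^1(\R^d)$. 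Assumption \ref{assumption_tightness} together with the $H^1$-control of $\sqrt\rho$ then yields, via Rellich--Kondrachov and tightness, relative compactness of $\{\sqrt\rho(t)\}$ in $L^2(\R^d)$; up to a further subsequence, $\sqrt\rho(t_n)\to\sqrt{\rho_\infty}$ strongly in $L^2$ and weakly in $H^1$, hence $\rho(t_n)\to\rho_\infty$ in $L^1(\R^d)$ with $\|\rho_\infty\|_{L^1}=\|\rho_0\|_{L^1}$.

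The heart of the proof is to show that $\rho_\infty$ satisfies the stationary identity
\[ \lambda\,\nabla\rho_\infty \;=\; \tfrac12\,\rho_\infty\,\nabla\!\left(\tfrac{\Delta\sqrt{\rho_\infty}}{\sqrt{\rho_\infty}}\right) \qquad\text{in }\mathcal{D}'(\R^d). \]
I would obtain this by testing the weak momentum equation of Definition \ref{weak}(ii) against $\zeta(x)\,\chi_\tau(t-t_n)$ for a smooth bump $\chi_\tau$ of width $\tau\to 0$: the convective $\Lambda\otimes\Lambda$ and friction $\mu J$ contributions have vanishing time-average thanks to the $L^1(\R_+)$-integrability of $E_c$; the pressure $\lambda\rho$ and the Bohm term $\nabla\sqrt\rho\otimes\nabla\sqrt\rho - (1/4)\rho\Delta\Div\zeta$ pass to the limit by the strong $H^1$-convergence of $\sqrt\rho(t_n)$ combined with the $H^2$-bound of Assumption \ref{assumption_reg_focusing}; the $\partial_t J$ contribution reduces to boundary values controlled by the $L^2$-bound on $J$ inherited from $E_c\in L^1$.

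To conclude, I would rewrite the stationary equation on $\{\rho_\infty>0\}$ in the form $\nabla(2\lambda\log\rho_\infty - \Delta\sqrt{\rho_\infty}/\sqrt{\rho_\infty})=0$ and match against a Gaussian ansatz: the constraints $\rho_\infty\geq 0$, $\rho_\infty\in L^1$, and $\int|x|^2\rho_\infty<\infty$ force $\rho_\infty$ to be a translated Gaussian of the announced form, with $c_\lambda$ fixed by mass conservation and the center $x_\infty$ fixed by the limit \eqref{center_mass_conv} of the center of mass. Since every subsequential weak-$L^1$ limit of $\rho(t)$ coincides with this same profile, the full convergence follows by the usual subsequence principle.

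The main obstacle I anticipate is the passage to the stationary limit equation: one must handle $\partial_t J$ without any time-continuity estimate on $J$, which is why combining the time-averaging trick with the $H^2$-regularity of Assumption \ref{assumption_reg_focusing} seems essential. A secondary technical point is the classification of admissible stationary profiles as translated Gaussians, which reduces to matching coefficients of a quadratic exponent once the equation is reformulated in terms of $\log\rho_\infty$.
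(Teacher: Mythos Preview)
Your outline has the right architecture, but there are two genuine gaps that would prevent the argument from closing.

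\textbf{Pointwise-in-time extraction versus the subsequence principle.} You select a \emph{specific} sequence $t_n$ along which $E_c(t_n)\to 0$, identify a limit $\rho_\infty$, and then invoke the subsequence principle. But the subsequence principle requires that \emph{every} sequence $t_n\to\infty$ admit a subsequence with the same limit, and $E_c\in L^1(\R_+)$ alone does not force $E_c(t_n)\to 0$ along arbitrary $t_n$. A related problem appears in your time-averaging step: having $\rho(t_n)\to\rho_\infty$ at the discrete times $t_n$ does not let you pass to the limit in $\int \chi_\tau(t-t_n)\,\lambda\rho(t)\,\Div\zeta\,dx\,dt$, which involves $\rho(t)$ for $t$ in a whole interval around $t_n$. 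The paper fixes both issues at once: for an \emph{arbitrary} $t_n\to\infty$ it time-shifts to $\rho_n(t,x)=\rho(t+t_n,x)$ on $(0,1)\times\R^d$, extracts weak $L^1$ convergence in space-time via de la Vall\'ee-Poussin/Dunford--Pettis, and uses $\int_0^\infty E_c<\infty$ in the integrated form $\int_0^1\|\sqrt{\rho_n}u_n\|_{L^2}^2\,dt\to 0$ (no pointwise claim). Passing to the limit in the continuity equation then shows $\partial_t\rho_\infty=0$, and in the momentum equation one obtains the stationary identity; the $H^2$ bound is used exactly as you anticipate, to upgrade weak $H^1$ to strong $H^1_{\loc}$ convergence for the quadratic Bohm term.

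\textbf{Classification of the stationary profile.} ``Matching against a Gaussian ansatz'' is not a uniqueness proof: it shows Gaussians solve the equation, not that they are the only solutions among nonnegative $L^1$ densities with bounded second moment. The paper reduces the stationary identity to the semilinear elliptic equation $-\tfrac12\Delta f+\omega f+\lambda f\log|f|^2=0$ with $f=\sqrt{\rho_\infty}$, and then invokes a nontrivial rigidity result from the literature (d'Avenia--Montefusco--Squassina) stating that any nonnegative $L^1_{\loc}$ distributional solution with $\Delta f\in L^1_{\loc}$ is either trivial or, after elliptic regularity, a translate of the Gausson. You would need to either cite such a result or supply an independent argument; the constraints you list do not by themselves force a Gaussian.
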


\begin{remark}
Assumption \ref{assumption_tightness} deserves a few remarks. First of all, this hypothesis seems to be true for every energy weak solution, as it holds for every Gaussian solutions (see Remark \ref{moving_gaussian_remark}), and seems to be verified through numerical simulations.

Despite probably not being optimal, this hypothesis is crucial in order to show the convergence of $\rho$ to a universal Gaussian profile. In fact, energy estimate \eqref{eq_energy} prevents oscillations and finite-time blow-up of the density $\rho$, however the boundedness of the center of mass $\int x \rho$ still allows two symmetric Gaussian functions to symmetrically diverge with respect to the origin at infinity (although numerics seems to invalidate this kind of scenario). 

Let us explain why the method we use in order to bound $\int x\rho$ fails when it comes to estimate $\int |x|^2 \rho$. As we would like to exploit the crucial property that $E_c \in L^1(\R)$, we try to calculate
\[ \frac{d}{dt} \left( \int_{\R^d}|x|^2 \rho(t,x) dx  \right) = \int_{\R^d} x \rho(t,x) u(t,x) dx :=f(t).  \]
Using equation \eqref{fluid}, we get
\[ f'+\mu f  = \| \nabla \sqrt{\rho} \|^2_{L^2(\R^d)} + \| \sqrt{\rho} u \|^2_{L^2(\R^d)} +\lambda d \| \rho_0\|_{L^1(\R^d)} = 2 E(t)  -2\lambda \int_{\R^d} \rho \log \rho + \lambda d \| \rho_0\|_{L^1(\R^d)},    \]
which implies that $f$ is bounded using the logarithmic Sobolev inequality \eqref{log_sobo_ineq} and the energy estimate \eqref{eq_energy}. Unfortunately, this is insufficient to show that $f$ is integrable.
\end{remark}

For the focusing case ($\lambda >0$), we are going to see that we have an analogous result of what holds for the logarithmic Schrödinger equation. We denote 
\[ \mathcal{F}(H^1)(\R^d):=\enstq{ \psi \in L^2(\R^d)}{ \langle x \rangle \psi(x) \in L^2(\R^d)},  \]
where $\langle x \rangle = \sqrt{1+|x|^2}$.

\begin{assumption} \textbf{(Regularity in the defocusing case).} \\
Let $\lambda >0$ and $(\rho_0,J_0) \in L^1(\R^d) \times  L^1(\R^d)$. We assume there exists an energy weak solution $(\rho,J)$ of system \eqref{continuity}-\eqref{fluid} with initial data $(\rho_0,J_0)$ in the sense of Definition \ref{weak} with the additional regularity:
\[ \sqrt{\rho} \in L^{\infty}(\R;H^1\cap \mathcal{F}(H^1)(\R^d) ). \]
\label{assumption_reg_defocusing}
\end{assumption}

\begin{remark} \label{rem_defocusing}
Note that the above condition ensures that the term $\int \rho \log \rho$ in the energy \eqref{energy} is bounded in the defocusing case (cf \cite{carles2018} or below in the proof of Lemma \ref{lemma_entropy}).
\end{remark}

\begin{theorem} \textbf{(Defocusing case).} \\
If $\lambda >0$, under Assumption \ref{assumption_reg_defocusing}, we denote by $\tilde{\rho}$ the scaled function defined by
\[ \rho(t,x)= \frac{1}{\tau(t)^{d/2} }\tilde{\rho}\left( t,\frac{x}{\tau(t)} \right), \]
where  $\tau $ denotes the unique $\mathcal{C}^{\infty}(\left[ 0,\infty \right))$ solution of the differential equation
\begin{equation*}
\ddot{\tau} = \frac{ 2 \lambda}{\tau} - \mu \dot{\tau}, \ \ \ \tau(0)=1, \ \ \ \dot{\tau}(0)=0,
\end{equation*}
which satisfies, as $t \rightarrow +\infty$,
\[ \tau(t) \sim 2 \sqrt{\frac{\lambda t}{\mu}} .    \]
Then 
\[ \tilde{\rho}(t,.) \underset{t\to+\infty}{\rightharpoonup} c_0 e^{-|x|^2} \ \ \ \text{weakly in} \ L^1(\R^d),\]
where $c_0 :=\| \rho_0 \|_{L^1(\R^d)} \pi^{-d/2}$.
\label{theorem2}
\end{theorem}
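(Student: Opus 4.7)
The plan is to adapt to the dissipative setting the strategy of Carles--Gallagher for the logarithmic Schrödinger equation (see \cite{carles2018}), the point being that the scaling factor $\tau(t)$ is exactly the one governing the dispersive width of Gaussian solutions of \eqref{NMeq} in the defocusing regime (cf.\ Section 3), so that after rescaling the Gaussian $c_0 e^{-|y|^2}$ becomes an asymptotic equilibrium. I would first set $y = x/\tau(t)$, introduce $\tilde\rho$ as in the statement together with the rescaled velocity field $\tilde u(t,y) := \tau(t)\, u(t,\tau(t) y) - \dot\tau(t) y$ (which absorbs the Galilean boost induced by the time-dependent dilation), and derive the fluid system satisfied by $(\tilde\rho,\tilde u)$ directly from \eqref{continuity}--\eqref{fluid}. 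The ODE $\ddot\tau + \mu\dot\tau = 2\lambda/\tau$ is chosen precisely so that the pair $(c_0 e^{-|y|^2},0)$ is a stationary solution of the resulting rescaled system.

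Next I would define a modulated energy functional
\[
\mathcal{H}(t) = \frac{1}{2}\int_{\R^d} \tilde\rho\, |\tilde u|^2 \, dy + \frac{1}{2\tau^2}\int_{\R^d} |\nabla\sqrt{\tilde\rho}|^2\, dy + \lambda\int_{\R^d}\tilde\rho\log\tilde\rho \, dy + \lambda\int_{\R^d} |y|^2\, \tilde\rho \, dy + K,
\]
where $K$ is an explicit constant chosen so that $\mathcal{H}\geq 0$ with equality only at the target Gaussian. This is essentially the rescaled energy written in a frame adapted to the reference Gaussian. Remark \ref{rem_defocusing} combined with the logarithmic Sobolev inequality \eqref{log_sobo_ineq} ensures that $E$ is bounded from below, so the dissipation inequality \eqref{eq_energy} yields $E_c\in L^1(\R_+)$. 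Formally differentiating $\mathcal{H}$ along the rescaled flow and using the ODE satisfied by $\tau$, I expect to obtain a differential inequality of the form
\[
\frac{d\mathcal{H}}{dt} + \frac{\mu}{\tau^2}\int_{\R^d}\tilde\rho\,|\tilde u|^2\,dy \leq R(t), \qquad R\in L^1(\R_+),
\]
the $L^1$ integrability of the remainder $R$ resting on the sharp asymptotics $\tau \sim 2\sqrt{\lambda t/\mu}$ and $\dot\tau \sim \sqrt{\lambda/(\mu t)}$ established in Section 3. This would force $\mathcal{H}$ to remain bounded, and combined with $E_c\in L^1$ would yield $\mathcal{H}(t_n)\to 0$ along some sequence $t_n\to\infty$.

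The assumption $\sqrt{\rho}\in L^\infty(\R;\mathcal{F}(H^1))$ provides a uniform moment bound sufficient to ensure tightness of the family $(\tilde\rho(t_n,\cdot))_n$ in $L^1(\R^d)$. Extracting a weakly convergent subsequence and using a Csisz\'ar--Kullback--Pinsker-type control of $\|\tilde\rho - c_0 e^{-|y|^2}\|_{L^1}$ by the relative entropy part of $\mathcal{H}$, one forces the limit to be $c_0 e^{-|y|^2}$; uniqueness of the limit then upgrades subsequential convergence to full convergence. The main technical obstacle is justifying the computation of $d\mathcal{H}/dt$ rigorously at the level of weak solutions, using only the regularity provided by Assumption \ref{assumption_reg_defocusing}, and controlling the error terms arising from the time-dependence of $\tau$; making every such term integrable in time is delicate because the decay of $\dot\tau$ is only $t^{-1/2}$, and relies crucially on the precise long-time behavior of $\tau$ furnished by the ODE analysis of Section 3.
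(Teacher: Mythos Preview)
Your approach via a modulated relative-entropy functional differs from the paper's. The paper does define essentially the same functional (called $\mathcal{E}$ there, with entropy part $\mathcal{E}_{\ent}=\int\varrho\log(\varrho/\Gamma)$), but uses it only to obtain the a~priori bounds of Lemma~\ref{lemma_entropy} --- boundedness of $\mathcal{E}_+$ and the integrability $\int_0^\infty \dot\tau\,\tau^{-3}\mathcal{E}_{\kin}\,dt<\infty$ --- which feed into a de~la~Vall\'ee-Poussin/Dunford--Pettis compactness argument. The identification of the weak limit as $\Gamma$ is then obtained by an additional time rescaling $s$ with $\partial_s=(\mu/\lambda)\tau^2\partial_t$, eliminating $\mathcal{J}$, and showing that the limiting equation for $\tilde\varrho_\infty$ is the linear Fokker--Planck equation $\partial_s\tilde\varrho_\infty=L\tilde\varrho_\infty$ with $L=\Delta+2\,\Div(y\,\cdot)$; one then invokes the known convergence of Fokker--Planck solutions to $\Gamma$ (cf.\ \cite{arnold2000}), together with the observation (from the continuity equation in the $s$ variable) that the limit is actually $s$-independent, hence equal to $\Gamma$.

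Your scheme has a genuine gap at the step ``combined with $E_c\in L^1$ would yield $\mathcal{H}(t_n)\to 0$ along some sequence''. The differential inequality you propose, and the integrability of the kinetic energy, control only the kinetic and $\tau^{-2}$-weighted quantum-pressure parts of $\mathcal{H}$; they give no mechanism forcing the relative entropy $\int\tilde\rho\log(\tilde\rho/\Gamma)$ to decay. The damping in \eqref{NMeq} acts purely on the velocity, so no entropy-dissipation inequality of the form $\dot{\mathcal{H}}\le -c\,\mathcal{H}+R$ is available here, and $\mathcal{H}$ could a~priori converge to a strictly positive limit. This is exactly the difficulty the paper sidesteps by passing to the limiting Fokker--Planck dynamics, where entropy decay \emph{is} available. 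Without importing that step (or some substitute producing decay of the entropy itself), the Csisz\'ar--Kullback inequality cannot be triggered and the argument does not close.
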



\begin{remark} \textit{(Effect of scaling factors)} \\
We conclude this section with a last property that equation \eqref{NMeq} shares with the logarithmic Schrödinger equation \eqref{logNLS}. Unlike the typical power-like nonlinear Schrödinger equation \eqref{powerNLS}, replacing $\psi$ with $\kappa \psi$ ($\kappa >0$) in \eqref{NMeq} has only little effect. In fact, if $\psi$ is solution of \eqref{NMeq} with initial datum $\psi(0,x)=\psi_0$, then the purely time-dependent gauge transform
\[ \kappa \psi \exp \left(i \frac{\lambda}{\mu} \log |\kappa|^2 \left(1- e^{-\mu t} \right) \right) \]
is also a solution of \eqref{NMeq} with initial datum $\psi(0,x)=\kappa \psi_0$. In particular, the $L^2$-norm of the initial datum has no influence on the dynamics of the solution.

\end{remark}

\section{Gaussian solutions}

\subsection{Propagation of Gaussian data}

The following calculations will be made in dimension $d=1$ for reader's convenience, but they can all be adapted in any dimension $d$ (cf Remark \ref{remark_d} at the end of this section).\\
We plug into \eqref{NMeq} Gaussian functions of the form:
\begin{equation}
 \psi(t,x) = b(t) \exp\left(-\frac{1}{2} a(t) x^2 \right), \label{expression_psi}
 \end{equation}
so we have the equation
\[ i \dot{b} -\frac{1}{2} i \dot{a} b x^2 - \frac{1}{2} ab + \frac{1}{2} a^2 b x^2 = \lambda b \log(|b|^2) - \lambda b \Re(a) x^2 - \frac{1}{2}i \mu b \log \left( \frac{b}{b^*} \right) - \frac{1}{2} \mu b \Im(a) x^2. \]

Equating the constant in $x$ and the factors of $x^2$, we get the following non-linear differential equations system:
 \begin{align}
    i \dot{b} - \frac{1}{2}ab  = \lambda b \log(|b|^2)- \frac{1}{2} i\mu b \log\left(\frac{b}{b^*} \right),\label{eqb1}  \\
    i \dot{a}- a^2  = 2 \lambda \Re(a) + \mu \Im(a).  \label{eqa1}
\end{align}   

We first look at equation \eqref{eqb1} in order to express $|b|$ as a function of $a$. Multiplying \eqref{eqb1} by $b^*$, we get 
\[ i \dot{b} b^* - \frac{1}{2}a|b|^2  = \lambda |b|^2 \log(|b|^2)- \frac{1}{2} i\mu |b|^2 \label{eqb} \log\left(\frac{b}{b^*} \right). \]

As $i\log(b/b*) \in \R$, by taking the imaginary part, we obtain
\[ \Im \left( i \dot{b} b^* - a \frac{|b|^2}{2} \right)= 0.    \]
We recall that  $\Re ( \dot{b} b^* ) = \frac{1}{2} \frac{d}{dt}(|b|^2)$, so we have the differential equation 
\[  \frac{d}{dt}(|b|^2) = |b|^2 \Im(a),    \]
so
\begin{equation}
|b(t)|=|b(0)| \exp \left( \frac{1}{2} \Im A(t) \right) \label{expression_b}
\end{equation}
with $ A(t)=\int_0^t a(s) ds $. 

\begin{remark} 
We can also express $b$ as a function of $a$. We then look at $b$ under the form
\[ b(t)=r(t) e^{i \phi(t)},  \]
where $r(t)=|b(t)|$. By plugging this expression into \eqref{eqb1}, we obtain the equation
\[ i \dot{r} - \dot{\phi} r - \frac{1}{2} ar = \lambda r \log(r^2) +\mu \phi r .      \]
As $\dot{r}/r=\frac{1}{2} \Im(a(t)) $, we have
\[ \dot{\phi} + \mu \phi = \frac{1}{2} i \Im(a) - \frac{1}{2}a - \lambda \log(r^2) = \frac{1}{2} i \Im(a) - \frac{1}{2}a - \lambda \log(|b_0|^2) - \lambda \Im(A). \]
Thanks to the variation of constant formula, we get $\phi$ as the solution of this ordinary differential equation:
\[ \phi(t)=\phi_0 e^{\mu t} + \int_0^t e^{\mu(t-s)} \left( \frac{1}{2} i \Im(a(s)) - \frac{1}{2} a(s) - \lambda  \log(|b_0|^2) - \lambda \Im(A(s)) \right) ds,   \]
and so we get $b$ as a function of $a$.
\end{remark}

We now want to solve equation \eqref {eqa1} with initial conditions $a(0)=\alpha_0 + i \beta_0$, with $\alpha_0 >0$. As suggested in \cite{carles2018}, we denote
\[ a= -i \frac{ \dot{\omega}}{\omega}, \]
so we have
\[ i \dot{a} - a^2 = \frac{ \ddot{\omega} \omega - \dot{\omega}^2}{ \omega^2} + \left( \frac{\dot{\omega}}{\omega} \right)^2 = 2 \lambda \Re\left(-i \frac{\dot{\omega}}{\omega} \right) + \mu \Im\left(-i \frac{\dot{\omega}}{\omega} \right), \]
that gives the equation
\[ \ddot{\omega} = 2 \lambda \omega \Im \left(\frac{\dot{\omega}}{\omega} \right) - \mu \omega \Re \left(\frac{\dot{\omega}}{\omega} \right). \] 

Denoting $\omega = r e^{i \theta}$, we calculate
\begin{align*}
    \dot{\omega} & =  \dot{r} e^{i \theta} +ir \dot{\theta} e^{i \theta}, \\  
    \ddot{\omega} & =  \ddot{r} e^{i \theta} + 2i \dot{r} \dot{\theta} e^{i \theta} - r \dot{\theta}^2e^{i \theta}+ir \ddot{\theta} e^{i \theta}, \\
    2 \lambda \omega \Im \left(\frac{\dot{\omega}}{\omega} \right) & = 2 \lambda r e^{i \theta} \Im \left( \frac{\cancel{\dot{r}}e^{i \theta} +i r \dot{\theta} e^{i \theta}}{r e^{i \theta}} \right) = 2 \lambda r \dot{\theta} e^{i \theta},  \\   
     \mu \omega \Re \left(\frac{\dot{\omega}}{\omega} \right) & =  \mu r e^{i \theta} \Re \left( \frac{\dot{r}e^{i \theta} +\cancel{i r \dot{\theta}} e^{i \theta}}{r e^{i \theta}} \right) =  \mu \dot{r}e^{i \theta}.    
\end{align*}

Then we get the equation
\[ \ddot{r} +2ir \dot{\theta} - r \dot{\theta}^2 + i r \ddot{\theta} =2 \lambda r \dot{\theta} - \mu \dot{r}. \]

By taking the real and imaginary part, we get the real system of equations:
\begin{align}
    \ddot{r} - r \dot{\theta}^2 = 2 \lambda r \dot{\theta} - \mu \dot{r}  \label{eqrtheta1} \\
    r \ddot{\theta} + 2 \dot{r} \dot{\theta} = 0. \label{eqtheta1}
\end{align}

We know that
\[ \dot{\theta}(0)=\alpha_0 \ \ \ \text{and} \ \ \ \left( \frac{\dot{r}}{r} \right)(0)= - \beta_0 .\]

We can fix $r(0)=1$ so that $\dot{r}(0)=-\beta_0$. We note that \eqref{eqtheta1} gives us the conservation of the quantity
\[ \frac{d}{dt} (r^2\dot{\theta}) = r(2 \dot{r}\dot{\theta} + r \ddot{\theta}) =0, \]
so $\dot{\theta} = \alpha_0/r^2$. By plugging $\dot{\theta}$ into \eqref{eqrtheta1}, we finally obtain the following equation on $r$:
\begin{align} \boxed{ \ddot{r} = \frac{ \alpha_0^2}{r^3} + \frac{ 2 \lambda \alpha_0}{r} - \mu \dot{r}, } \label{eqr} \end{align}
with $r(0)=1$ and $\dot{r}(0)=-\beta_0$. By multiplying \eqref{eqr} by $\dot{r}$ and integrating between 0 and $t$, we get:
\begin{equation}
\dot{r}(t)^2 = \dot{r}(0)^2 + \alpha_0^2 \left( 1 - \frac{1}{r(t)^2} \right) + 4 \lambda \alpha_0 \log(r(t)) - 2 \mu \int_0^t\dot{r}(s)^2 ds. 
\label{eq_int}
\end{equation}

Recalling the expression of $a$, we get
\begin{equation}
a(t)= \frac{\alpha_0}{r(t)^2} -i \frac{\dot{r}(t)}{r(t)}, \label{expression_a}
\end{equation} 
in particular $\Re(a) \geq 0$. We note that we have an explicit expression of our Gaussian $\psi$ as function of $r$. As we are facing a non-linear equation of order 2, we will now look at the asymptotic behavior of its solutions for both the defocusing and the focusing case. This will give the asymptotic dynamics of our Gaussian solutions.

\subsection{Focusing case}

We first look at the focusing case which corresponds to $\lambda <0$ in \eqref{eqr}. The Cauchy theory ensures the existence of a $\mathcal{C}^2$-solution (denoted by $r$ in the following) of \eqref{eqr} as long as $r \neq 0$. We will show in this section that every Gaussian tends a multiple of the Gaussian limit $e^{\lambda x^2}$.

\begin{lemma}
There exist $m$, $M>0$ such that for all $t\geq 0$, 
\[ 0 <m \leq r(t) \leq M. \]
\end{lemma}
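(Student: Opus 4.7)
The plan is to build a Lyapunov-type functional for the ODE \eqref{eqr} that encodes both its conservative part and the friction $-\mu \dot r$, and then to read off the two-sided bound from coercivity. Set
\[ V(r) := \frac{\alpha_0^2}{r^2} - 4 \lambda \alpha_0 \log r, \qquad E(t) := \dot r(t)^2 + V(r(t)). \]
A one-line differentiation using \eqref{eqr}, or equivalently the already established identity \eqref{eq_int}, yields
\[ E(t) = E(0) - 2\mu \int_0^t \dot r(s)^2\,ds, \]
so $E$ is nonincreasing and in particular $V(r(t)) \leq E(0)$ on the maximal interval of existence of $r$.

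The key observation is that in the focusing regime $\lambda < 0$, the constant $-4\lambda \alpha_0$ is strictly positive, so $V$ is coercive at both endpoints of $(0,+\infty)$: as $r \to 0^+$ the $\alpha_0^2 r^{-2}$ term dominates the (negative) logarithmic term, forcing $V(r) \to +\infty$, and as $r \to +\infty$ the logarithmic term itself sends $V(r) \to +\infty$. Consequently the sublevel set $\{r>0 : V(r) \leq E(0)\}$ is a compact subinterval of $(0,+\infty)$, say $[m,M]$ with $0 < m \leq 1 \leq M$ (it contains $r(0)=1$). By continuity of $r$ together with the estimate $V(r(t))\leq E(0)$, a standard connectedness argument shows that $r(t)$ cannot leave $[m,M]$, which is the desired bound.

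Finally, the lower bound $r(t) \geq m > 0$ is precisely what is needed to run the continuation principle for \eqref{eqr}: since the Cauchy theory provides a local $\mathcal{C}^2$-solution as long as $r$ does not vanish, the uniform bound $r(t)\in[m,M]$ automatically upgrades $r$ to a global solution on $[0,+\infty)$.

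The only delicate point is verifying the coercivity of $V$ at $0^+$, where its two contributions have opposite signs; but this reduces to the elementary comparison between $r^{-2}$ and $|\log r|$ near $0$, so no genuine obstacle is expected. The remainder is bookkeeping: computing $\dot E = -2\mu \dot r^2$ directly from \eqref{eqr}, and invoking the standard continuation argument.
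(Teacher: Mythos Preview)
Your argument is correct and rests on the same energy identity \eqref{eq_int} that the paper uses, but your packaging is cleaner. The paper extracts the upper bound exactly as you do (drop the nonnegative terms $\dot r^2 + \alpha_0^2/r^2 + 2\mu\int \dot r^2$ and use $-4\lambda\alpha_0>0$), but for the lower bound it runs a separate case analysis on whether $\dot r(0)=0$, bounding $r$ from below via the inequality $\alpha_0^2/r(t)^2 \leq \dot r(0)^2 + \alpha_0^2 + 4\lambda\alpha_0\log r(t)$ combined with an auxiliary bound on the right-hand side. By contrast, you recognise that \eqref{eq_int} is precisely the dissipation statement $E(t)=E(0)-2\mu\int_0^t \dot r^2$ for the mechanical energy $E=\dot r^2 + V(r)$, and then both bounds fall out at once from the coercivity of $V$ on $(0,\infty)$. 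This avoids the case split entirely and makes the continuation argument (global existence of $r$) transparent, which the paper leaves implicit. One minor remark: the sublevel set $\{V\leq E(0)\}$ is indeed a single interval here because $V$ has a unique critical point at $r=\sqrt{-\alpha_0/(2\lambda)}$, but even without checking this your connectedness argument would still trap $r(t)$ in the connected component of the sublevel set containing $r(0)=1$, which is all you need.
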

\begin{proof}
Using equation \eqref{eq_int}, as 
\[ \dot{r}(t)^2 +  \frac{\alpha_0^2}{r(t)^2} + 2 \mu \int_0^t\dot{r}(s)^2 ds \geq 0,   \]
we get that
\[ \dot{r}(0)^2 + \alpha_0^2  + 4 \lambda \alpha_0 \log(r(t)) \geq 0.  \]

Then 
\[ \log(r(t)) \leq \frac{\dot{r}(0)^2 + \alpha_0^2 }{- 4\lambda \alpha_0} \]
as $\lambda <0$, so $r(t) \leq M:= \exp(-(\dot{r}(0)^2 + \alpha_0^2 )/4\lambda \alpha_0)$. In order to get a lower bound, we remark that equation \eqref{eq_int} also gives
\[ \dot{r}(0)^2 + \alpha_0^2  + 4 \lambda \alpha_0 \log(r(t)) \geq 2 \mu \int_0^t\dot{r}(s)^2 ds .   \]

\underline{Case 1:} $\dot{r}(0) \neq 0$.\\
Then for $t_0 >0$ fixed (that could be taken as small as we want), continuity of $\dot{r}$ implies that there exists $ c_{t_0} >0$ such that $2 \mu \int_0^t\dot{r}(s)^2 ds \geq c_{t_0}$ for all $t \geq t_0$. Using again  equation \eqref{eq_int}, we have that 
\[ \dot{r}(0)^2 + \alpha_0^2  + 4 \lambda \alpha_0 \log(r(t)) \geq \frac{\alpha_0^2}{r(t)^2},  \]
so, for all $t\geq t_0$,
\[ r(t) \geq \frac{\alpha_0^2}{\sqrt{\dot{r}(0)^2 + \alpha_0^2  + 4 \lambda \alpha_0 \log(r(t)) } } \geq \frac{\alpha_0}{\sqrt{c_{t_0}}}. \]
We can now choose $t_0$ such that for all $t \leq t_0$, $r(t) \geq 1/2$ (as we know that $r$ is continuous and $r(0)=1$), and the result holds by taking $m=\min(1/2,\alpha_0/\sqrt{c_{t_0}})$.\\

\underline{Case 2:} $\dot{r}(0) = 0$.\\
If for all $t \geq 0$, $\dot{r}(t)=0$, then $r=1$ and the result is obvious. If it exists $T>0$ such that $\dot{r}(T) \neq 0$, then a time shift $t \mapsto t-T$ in equation \eqref{eqr} brings us back to the Case 1. Note that this case will be the one used as the generic case in the following sections.

\end{proof}

Before going for the proof of Proposition \ref{prop_r_focusing}, we recall a lemma from real analysis:

\begin{lemma} \label{lemma_unif_cont}
Let $f:\R_+ \rightarrow \R_+$ be a nonnegative uniformly continuous function, and assume that $ \int_0^{\infty} f(x) dx  < \infty$. Then $f(x) \rightarrow 0$ as $x \rightarrow +\infty$.
\end{lemma}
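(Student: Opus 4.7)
The plan is to argue by contradiction, using uniform continuity to convert a single large value into a whole interval of large values.

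Suppose the conclusion fails. Then there exists $\eps > 0$ and a sequence $x_n \to +\infty$ such that $f(x_n) \geq \eps$ for every $n$. First I would invoke uniform continuity to pick $\delta > 0$ with the property that $|x-y| \leq \delta$ implies $|f(x)-f(y)| \leq \eps/2$. Consequently, for every $n$ and every $y \in [x_n, x_n+\delta]$, one has $f(y) \geq \eps/2$, so $\int_{x_n}^{x_n+\delta} f \geq \delta \eps/2$.

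Next, by passing to a subsequence I would arrange that the intervals $[x_n, x_n+\delta]$ are pairwise disjoint: since $x_n \to \infty$, I can recursively select $x_{n_{k+1}} > x_{n_k} + \delta$. Summing the lower bounds over $k$ gives
\[
\int_0^\infty f(x)\, dx \;\geq\; \sum_{k=1}^\infty \int_{x_{n_k}}^{x_{n_k}+\delta} f(x)\, dx \;\geq\; \sum_{k=1}^\infty \frac{\delta \eps}{2} \;=\; +\infty,
\]
contradicting the integrability hypothesis $\int_0^\infty f < \infty$. Hence $f(x) \to 0$ as $x \to +\infty$.

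This is a classical exercise and there is no real obstacle; the only mild care needed is the disjointness extraction, which is automatic because $x_n \to \infty$ forces the $x_n$ to leave every bounded set.
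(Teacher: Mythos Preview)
Your proof is correct and is exactly the classical contradiction argument one would expect. The paper itself does not prove this lemma at all: it simply states it as a recalled fact from real analysis and moves on, so there is no approach to compare against; your argument supplies the details the paper omits.
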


\begin{proposition} \label{prop_r_focusing}
We denote by $r$ the $\mathcal{C}^2$-solution on $\left[ 0, +\infty\right[$ of equation \eqref{eqr} with $\lambda<0$. Then, as $t\rightarrow + \infty$, we have 
\[  r(t) \rightarrow \sqrt{\frac{ \alpha_0}{-2 \lambda}}  \ \ \ \text{and} \ \ \ \dot{r}(t) \rightarrow 0. \]
\end{proposition}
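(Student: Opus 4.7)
The plan is to read \eqref{eqr} as the one-dimensional damped Newton equation $\ddot r + \mu \dot r + V'(r) = 0$ associated to the potential
\[ V(r) := \frac{\alpha_0^2}{2r^2} - 2\lambda\alpha_0 \log r. \]
Because $\lambda < 0$, $V$ is coercive on $(0,+\infty)$ (it blows up both at $0^+$ and at $+\infty$), strictly decreasing on $(0,r_*)$ and strictly increasing on $(r_*,+\infty)$, with a unique critical point, a strict global minimum, at $r_* := \sqrt{\alpha_0/(-2\lambda)}$. A direct rewriting of the integral identity \eqref{eq_int} is the dissipation formula
\[ F(t) + \mu \int_0^t \dot r(s)^2\, ds = F(0), \qquad F := \tfrac{1}{2}\dot r^2 + V(r), \]
so $F$ is non-increasing. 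Together with the bounds $0 < m \leq r(t) \leq M$ from the previous lemma, this immediately yields $\dot r \in L^2(\R_+)$ as well as an $L^\infty$ bound $\tfrac12 \dot r(t)^2 \leq F(0) - \inf_{[m,M]} V$.

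From there I would establish $\dot r(t) \to 0$ by applying Lemma \ref{lemma_unif_cont} to $f = \dot r^2$. The ODE \eqref{eqr}, combined with $m \leq r \leq M$ and the $L^\infty$ bound on $\dot r$, shows that $\ddot r$ is bounded on $\R_+$, so $(\dot r^2)' = 2 \dot r\, \ddot r$ is bounded, $\dot r^2$ is Lipschitz and in particular uniformly continuous, and the lemma applies.

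The delicate step, which I expect to be the real obstacle, is to upgrade $\dot r \to 0$ to $r(t) \to r_*$. Since $F$ is non-increasing and bounded below, $F(t) \to F_\infty$ for some $F_\infty \geq V(r_*)$, and $\dot r \to 0$ gives $V(r(t)) \to F_\infty$. I will argue by contradiction that $F_\infty = V(r_*)$. If instead $F_\infty > V(r_*)$, then by the strict monotonicity of $V$ on either side of $r_*$, the level set $\{V = F_\infty\}$ consists of exactly two points $r_1 < r_* < r_2$; a small neighbourhood of $r_*$ is contained in $\{V < F_\infty - \delta\}$, so eventually $r(t)$ avoids it and lives in the union of two small intervals around $r_1$ and $r_2$, and by continuity of $r$ it must stay in one of these components for all large $t$, say the one around $r_1$. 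The strict monotonicity of $V$ on that small interval together with $V(r(t)) \to F_\infty$ then forces $r(t) \to r_1 \neq r_*$. Plugging back into \eqref{eqr} and using $\dot r \to 0$ gives $\ddot r(t) \to -V'(r_1) \neq 0$, so $\dot r$ would eventually grow linearly in $t$, contradicting $\dot r \to 0$. Hence $F_\infty = V(r_*)$, and the strict minimality of $V$ at $r_*$ combined with the boundedness of $r$ promotes $V(r(t)) \to V(r_*)$ into $r(t) \to r_*$, as claimed.
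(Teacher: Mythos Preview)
Your proof is correct. The first half (boundedness of $\dot r$ and $\ddot r$, then $\dot r \to 0$ via Lemma~\ref{lemma_unif_cont} applied to $\dot r^2$) is identical to the paper's argument. The second half is where you diverge: the paper proceeds by showing in addition that $\ddot r \to 0$ --- it bounds $\int_0^\infty \ddot r^2$ by multiplying \eqref{eqr} by $\ddot r$, then bounds $\dddot r$ and applies Lemma~\ref{lemma_unif_cont} once more --- and only then passes to the limit in \eqref{eqr} along an arbitrary subsequence $t_n\to\infty$ to pin down the unique accumulation point $r_*$. You bypass the $\ddot r\to 0$ step entirely by exploiting the Lyapunov structure: the monotone convergence of $F=\tfrac12\dot r^2+V(r)$ together with $\dot r\to 0$ gives $V(r(t))\to F_\infty$ directly, and the level-set/connectedness argument then reduces the question to ruling out convergence of $r$ to a non-critical point of $V$, which you do by the same ``$\ddot r$ tends to a nonzero constant'' contradiction. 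Your route is a bit more conceptual and saves one application of Lemma~\ref{lemma_unif_cont}; the paper's route is more hands-on but yields the extra information $\ddot r\to 0$ along the way.
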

\begin{proof}
Still using equation \eqref{eq_int}, as we can remark that
\[  \frac{\alpha_0^2}{r(t)^2} + 2 \mu \int_0^t\dot{r}(s)^2 ds \geq 0,   \]
we get that
\[ \dot{r}(t)^2 \leq \dot{r}(0)^2 + \alpha_0^2  + 4 \lambda \alpha_0 \log(r(t)), \]
and as we know that $m \leq r(t) \leq M$ for all $t \geq 0$, we also get that $\dot{r}$ is bounded, as well as $\ddot{r}$ using equation \eqref{eqr}. So $\dot{r} \ddot{r}$ is also bounded on $\R_+$, hence $\dot{r}^2$ is Lipschitz continuous so uniformly continuous. We assume by contradiction that $\dot{r}(t) \cancel{\rightarrow} 0$ at $+\infty$, so according to Lemma \ref{lemma_unif_cont} we have 
\[ \int_0^t \dot{r}(s)^2 ds \underset{t\to+\infty}{\longrightarrow} +\infty. \]
 By the integral expression \eqref{eq_int} and as $r$ is bounded and $\mu >0$ we get that $\dot{r}(t)^2 \rightarrow -\infty$ as $t \rightarrow \infty$, which is obviously impossible. So we get that
\[ \dot{r}(t) = \dot{r}(0) +\int_0^t \ddot{r}(s) ds \underset{t\to+\infty}{\longrightarrow}0,   \]
so $ \int_0^{\infty} \ddot{r}(s) ds $ has a finite limit (in particular, it is uniformly bounded). Multiplying equation \eqref{eqr} by $\ddot{r}$, integrating over $\left[ 0 ,t \right]$ and using the lower bound of $r$, we get that
\[ \int_0^t  \ddot{r}(s)^2 ds \leq \left( \frac{\alpha_0^2}{m^3} + \frac{2 \lambda \alpha_0}{m}  \right) \int_0^t  \ddot{r}(s) ds -\frac{\mu}{2} (\dot{r}(t)^2- \dot{r}(0)^2),  \]
so $ \int_0^{\infty} \ddot{r}(s)^2 ds  < \infty$. Differentiating equation \eqref{eqr}, we have
\[ \dddot{r} = -\frac{ 3\alpha_0^2 \dot{r}}{r^4} - \frac{ 2 \lambda \alpha_0  \dot{r}}{r^2} - \mu \ddot{r},  \]
so $\dddot{r}$ is bounded on $\R_+$, and $\ddot{r}^2$ is Lipschitz so uniformly continuous. Then by applying Lemma \ref{lemma_unif_cont} we have $\ddot{r}(t)^2 \rightarrow 0$ as $t \rightarrow \infty$, so $\ddot{r}\rightarrow 0$.\\
We take $(t_n)_n$ a sequence such that $t_n \rightarrow + \infty$ as $n \rightarrow \infty$. As $r$ is bounded, there is a subsequence (not relabeled here for convenience) such that $r(t_n) \rightarrow L$. We already know that $\dot{r}(t_n) \rightarrow 0$ and $\ddot{r}(t_n) \rightarrow 0$, so by passing in the limit in equation \eqref{eqr}, we get that
\[ \frac{\alpha_0^2}{L^3} + \frac{2 \lambda \alpha_0}{L} =0,  \]
and we deduce that $L= \sqrt{-\alpha_0/2 \lambda}$. As the sequence $(r(t_n))_n$ has a unique point of accumulation, by sequential characterisation of continuity we get that  
\[  r(t) \rightarrow \sqrt{\frac{ \alpha_0}{-2 \lambda}}.   \]
\end{proof}

Hence 
\[ a(t)= \frac{\alpha_0}{r(t)^2} -i \frac{\dot{r}(t)}{r(t)} \underset{t\to+\infty}{\longrightarrow} -2\lambda, \]
so for all $x \in \R$ we get that
\[ | \psi(t,x) | \underset{t\to+\infty}{\longrightarrow} C e^{\lambda x^2},  \]
where $C=C(\lambda,\alpha_0,\beta_0) > 0$ is a constant depending only on the initial conditions and $\lambda$. 
\begin{remark}
As we have $\| \psi(t) \|_{L^2(\R)} = \| \psi_0 \|_{L^2(\R)}$ for all $t \in \R_+$, we can make explicit the constant $C(\lambda,\alpha_0,\beta_0) = \sqrt[4]{-2\lambda/\pi} \| \psi_0 \|_{L^2(\R)} $.
\end{remark}


\begin{remark} \label{moving_gaussian_remark}
In order to check the tightness hypothesis in the Gaussian case, we can make the same calculations for Gaussian functions with moving centers as performed in \cite{carles_rigidity}, using the hydrodynamical equations of motion \eqref{continuity}-\eqref{fluid} this time, where $\rho$ denotes the density and $u$ the velocity of our Gaussian functions. Taking different initial centers leads to considering 
\begin{equation*} \label{moving_gaussian_CI}
    \rho(0,x)= b_0 e^{-\alpha_0 x^2}, \ \ \ u(0,x)=\beta_0 x + c_0,
\end{equation*}
for all $x \in \R$, with $b_0$, $\alpha_0 >0$ and $\beta_0$, $c_0 \in \R$. We then look at solutions of the form
\begin{equation} \label{moving_gaussian}
    \rho(t,x)= b(t) e^{-\alpha(t) (x-\overline{x}(t))^2}, \ \ \ u(t,x)=\beta(t) x + c(t),
\end{equation}
and plug this ansatz in equations \eqref{continuity}-\eqref{fluid}, leading to the system of ordinary differential equations
\begin{equation} \label{eqdiff_moving1}
    \dot{\alpha}+2\alpha \beta =0, \ \ \ \dot{\beta} + \beta^2 + \mu \beta = 2 \lambda \alpha + \alpha^2,
\end{equation}
\begin{equation} \label{eqdiff_moving2}
    \dot{\overline{x}} = \beta \overline{x} + c, \ \ \ \dot{c} + \beta c + \mu c = -2 \lambda \alpha \overline{x} - \alpha^2\overline{x},
\end{equation}
\begin{equation} \label{eqdiff_moving3}
   \dot{b} = b (\dot{\alpha} \overline{x}^2 + 2 \alpha \overline{x} (\dot{\overline{x}} - c) - \beta ) . 
\end{equation}
In order to solve this system, mimicking \cite{li_wang_2006} and \cite{carles_rigidity}, we can check that the two equations of \eqref{eqdiff_moving1} are satisfied if and only if $\alpha$ and $\beta$ are of the form
\[ \alpha(t)= \frac{\alpha_0}{r(t)^2}, \ \ \ \beta(t) = \frac{\dot{r}(t)}{r(t)}, \]
where we recall that $r$ is the solution of equation \eqref{eqr} with $r(0)=1$ and $\dot{r}(0)= \beta_0$. Plugging these expressions into \eqref{eqdiff_moving3}, we also get that
\[ b(t) = \frac{b_0}{r(t)}. \]
We know that $\rho$ is a Gaussian function centered in $\overline{x}$, so we get that for all $t \geq 0$, 
\[ \int_{\R} (x - \overline{x}(t)) \rho(t,x) dx =0, \]
hence $\overline{x}(t) = \int_{\R} x \rho(t,x) dx / \| \rho_0 \|_{L^1(\R)}$, and we get from \eqref{center_mass_conv} that $\overline{x}(t)$ has a finite limit as $t \rightarrow +\infty$. In particular, we note that the dissipation implies that the center of our Gaussian solutions is bounded, contrary to the logarithmic case ($\mu = 0$). This stands as an evidence that the Galilean invariance principle is no longer verified due to the dissipation term ($\mu >0$). In order to get the behavior of the second moment of $\rho$, we calculate:
\begin{align*}
    \int_{\R} x^2 \rho(t,x) dx  & =  \int_{\R} (x - \overline{x}(t) +\overline{x}(t))^2 \rho(t,x) dx \\
       & =  \int_{\R} (x - \overline{x}(t) )^2 \rho(t,x) dx + 2 \overline{x}(t) \int_{\R} (x - \overline{x}(t) ) \rho(t,x) dx  +  \int_{\R} \overline{x}(t)^2 \rho(t,x) dx \\
       & = \frac{b(t)}{2 \alpha(t)} +  \overline{x}(t)^2 \int_{\R}  \rho_0(x) dx = \frac{b_0}{2 \alpha_0} r(t)  + \overline{x}(t)^2 \| \rho_0 \|_{L^1(\R)} . \\
\end{align*}     
As we know that $r$ and $\overline{x}$ converges thanks to the previous computations, we get that $\int_{\R} x^2 \rho$ is uniformly bounded in time, which corroborate Assumption \ref{assumption_tightness}.


\end{remark}

\subsection{Defocusing case}

We now look at the defocusing case by taking $\lambda <0$ in \eqref{eqr}. By calculating an equivalent of $r(t)$ as $t \rightarrow \infty$, we will see that every Gaussian vanishes to 0 in $L^{\infty}$-norm in $t^{-\frac{1}{4}}$ (we recall that $d=1$ for the present computations). We will also make explicit the decay of $\| \nabla \psi(t) \|_{L^2}$ thanks to an equivalent of $\dot{r}$.

\begin{lemma}
There exists a constant $m >0$ such that $\forall t \geq 0$, $r(t) \geq m$.
\end{lemma}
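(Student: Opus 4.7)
The plan is to extract a lower bound on $r$ directly from the conservation-type identity \eqref{eq_int}, which I rewrite (dropping the manifestly nonnegative dissipation integral) as
\[ \frac{\alpha_0^2}{r(t)^2} - 4\lambda \alpha_0 \log r(t) \;\leq\; \dot{r}(0)^2 + \alpha_0^2 - \dot{r}(t)^2 \;\leq\; \dot{r}(0)^2 + \alpha_0^2. \]
This is the defocusing analogue of the argument used in the focusing case, except that here the roles of the two terms on the left are more subtle.

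Next I would examine the function $g:(0,\infty)\to\R$ defined by $g(r)= \alpha_0^2/r^2 - 4\lambda\alpha_0 \log r$. A quick computation gives $g'(r)= -2\alpha_0^2/r^3 - 4\lambda\alpha_0/r < 0$ for all $r>0$, using crucially that $\lambda >0$ (this is where the defocusing assumption enters). Hence $g$ is strictly decreasing, with $g(r) \to +\infty$ as $r\to 0^+$ and $g(r)\to -\infty$ as $r\to +\infty$. In particular $g$ is a continuous bijection from $(0,\infty)$ onto $\R$.

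Consequently the displayed inequality $g(r(t))\leq \dot{r}(0)^2+\alpha_0^2$ is equivalent, after applying the decreasing map $g^{-1}$, to
\[ r(t) \;\geq\; g^{-1}\!\bigl(\dot{r}(0)^2+\alpha_0^2\bigr) \;=:\; m \;>\;0, \]
which is the desired uniform lower bound.

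I do not anticipate a serious obstacle; the only point that needs a moment's care is verifying the sign of $g'$, which is exactly where $\lambda>0$ is used (in the focusing case $\lambda<0$, the $-4\lambda\alpha_0 \log r$ term pushes $g$ to $-\infty$ at $0^+$, and this simple argument fails; correspondingly the focusing lemma required the separate two-case analysis on $\dot{r}(0)$). I would also briefly note that, since $r\in\mathcal C^2$ and the bound is uniform, $r$ cannot reach $0$ in finite time, so the $\mathcal C^2$-solution is automatically global on $[0,+\infty)$, consistent with the statement.
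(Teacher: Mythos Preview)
Your proof is correct and follows essentially the same approach as the paper: both extract from the integrated identity \eqref{eq_int} an inequality that, thanks to $\lambda>0$, forces $r(t)$ to stay above a positive constant. The only difference is cosmetic: the paper also drops the term $\alpha_0^2/r(t)^2$, reducing the inequality to $4\lambda\alpha_0\log r(t)\geq -(\dot r(0)^2+\alpha_0^2)$ and obtaining the explicit bound $m=\exp\!\bigl(-(\dot r(0)^2+\alpha_0^2)/(4\lambda\alpha_0)\bigr)$, whereas you retain that term and invert the monotone map $g$.
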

\begin{proof}
Let $t \geq 0$, and we rewrite the expression \eqref{eq_int} under the form:
\[  \dot{r}(t)^2 + \frac{\alpha_0^2}{r(t)^2}  + 2 \mu \int_0^t\dot{r}(s)^2 ds= \dot{r}(0)^2 + \alpha_0^2 + 4 \lambda \alpha_0 \log(r(t)) \geq 0,\] 
so
\[ \log(r(t)) \geq - \frac{\dot{r}(0)^2 + \alpha_0^2}{4 \lambda \alpha_0 }. \]
Denoting $m = \exp \left(  - \frac{\dot{r}(0)^2 + \alpha_0^2}{4 \lambda \alpha_0 }  \right)$, we immediately get that $\forall t \geq 0$, $r(t) \geq m$.
\end{proof}

\begin{lemma}
There exists $T_0 \geq 0$ such that for all $t > T_0$, $\dot{r}(t) > 0$. \label{dotrlemma}
\end{lemma}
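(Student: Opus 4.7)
The plan is to view equation \eqref{eqr} as $\ddot{r} + \mu\dot{r} = F(r)$ with $F(r) := \alpha_0^2/r^3 + 2\lambda\alpha_0/r$, and to exploit the crucial defocusing feature that $F > 0$ on $(0,\infty)$ (recall $\lambda > 0$) and that $F$ is strictly decreasing. The argument then splits into a sign-preservation step for $\dot{r}$, together with a step showing that $\dot{r}$ must in fact become positive at some finite time.

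First I would observe that at every zero $t_{\ast}$ of $\dot{r}$ the equation forces $\ddot{r}(t_{\ast}) = F(r(t_{\ast})) > 0$, so $\dot{r}$ is strictly increasing through each of its zeros. From this I would derive the key monotonicity property: once $\dot{r}(t_1) > 0$ we have $\dot{r}(t) > 0$ for every $t \geq t_1$. Indeed, otherwise $t_{\ast} := \inf\{t > t_1 : \dot{r}(t) \leq 0\}$ would satisfy $\dot{r}(t_{\ast}) = 0$ with $\dot{r} > 0$ on $[t_1, t_{\ast})$, and the latter would force $\ddot{r}(t_{\ast}) \leq 0$ (by computing the left derivative of $\dot{r}$ at $t_{\ast}$), contradicting the previous observation.

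It then remains to produce at least one time at which $\dot{r}$ is strictly positive, which I would handle by contradiction: if $\dot{r} \leq 0$ on $\R_+$, then $r$ is nonincreasing, so together with the previous lemma we have $m \leq r(t) \leq r(0) = 1$ for all $t \geq 0$. Since $F$ is decreasing on $(0,\infty)$, this gives $F(r(t)) \geq F(1) = \alpha_0^2 + 2\lambda\alpha_0 =: c > 0$; combined with $-\mu\dot{r}(t) \geq 0$, the equation yields $\ddot{r}(t) \geq c$ on $\R_+$. Integrating then forces $\dot{r}(t) \geq \dot{r}(0) + c t \to +\infty$, contradicting $\dot{r} \leq 0$. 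Choosing $T_0$ to be any time where $\dot{r}(T_0) > 0$ and invoking the sign-preservation step completes the proof.

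I do not foresee a significant obstacle in this argument; the only mildly subtle point is the sign-preservation step, which hinges on distinguishing a genuine crossing from a tangency via the forced strict positivity of $\ddot{r}$ at a would-be zero of $\dot{r}$. The boundedness of $r$ from below supplied by the preceding lemma is what makes the contradiction in the third step quantitative rather than asymptotic.
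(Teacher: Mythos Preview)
Your proof is correct and takes essentially the same approach as the paper's: both use that $\ddot{r}(t_\ast) = F(r(t_\ast)) > 0$ at every zero $t_\ast$ of $\dot{r}$ for the sign-preservation step, and both establish the linear lower bound $\dot{r}(t) \geq \dot{r}(0) + F(r(0))\,t$ on an interval where $\dot{r} \leq 0$ (using that $r$ is then nonincreasing and $F$ is decreasing) to force $\dot{r}$ to become positive in finite time. The only difference is organizational---you prove sign-preservation first and then produce a time with $\dot{r} > 0$, whereas the paper first produces $T_0$ with $\dot{r}(T_0) \geq 0$ and then argues $\dot{r} > 0$ for $t > T_0$.
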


\begin{proof}
We first show the existence of a time $T_0$ such that $\dot{r}(T_0) \geq 0$. If $\dot{r}(0) \geq 0$, the result is trivial. Otherwise, $\dot{r}(0) < 0 $, and we denote 
\[ T_0 = \inf \enstq{t \geq 0 }{ \dot{r}(t) \geq 0} \leq +\infty. \]
So $\forall t \in \left[ 0, T_0 \right]$, we have
\[ \ddot{r}(t) = \frac{ \alpha_0^2}{r^3(t)} + \frac{ 2 \lambda \alpha_0}{r(t)} - \mu \dot{r}(t) \geq \frac{ \alpha_0^2}{r^3(t)} + \frac{ 2 \lambda \alpha_0}{r(t)}. \]
As $\dot{r} < 0$ on $\left[ 0, T_0 \right[$, $r$ is decreasing on this interval, so $\forall t \in \left[ 0, T_0 \right[$,
\[  \ddot{r}(t) \geq \frac{ \alpha_0^2}{r^3(0)} + \frac{ 2 \lambda \alpha_0}{r(0)} > 0.   \]
By integration,
\[  \dot{r}(t) \geq \dot{r}(0) + \left( \frac{ \alpha_0^2}{r^3(0)} + \frac{ 2 \lambda \alpha_0}{r(0)} \right) t .  \]
By linear growth, we deduce that $T_0$ is finite, and that $\dot{r}(T_0)=0$ by continuity. We remark that we have the bound
\[ T_0 \leq  - \frac{ \dot{r}(0)}{ \alpha_0^2+ 2 \lambda \alpha_0},   \]
still in the case where $\dot{r}(0) < 0$ (otherwise $T_0=0$). \\
We are going to show now that $\forall t > T_0$, $\dot{r}(t) > 0$. We denote 
\[ T = \inf \enstq{t \geq T_0 }{ \dot{r}(t) \leq 0} \leq +\infty. \]
If $T=+\infty$, we immediately get the result. Otherwise, by a continuity argument, $\dot{r}(T)=0$ and
\[ \ddot{r}(T) = \frac{ \alpha_0^2}{r(T)^3} + \frac{ 2 \lambda \alpha_0}{r(T)} > 0, \]
so for $\eta$ small enough, $\ddot{r}(T-\eta) >0$, and for $\eps > 0$ small enough, $\forall t \in \left] T-\eta, T -\eta+ \eps \right[$, we have $\ddot{r}(t) > 0$, so $\dot{r}$ is increasing on this interval, and in particular $\dot{r}(t) > 0$. Spreading this argument, we see that $\forall t > T_0$, $\dot{r}(t) >0$.
\end{proof}

\begin{lemma}
$r(t) \rightarrow +\infty$ as $t \rightarrow +\infty$.
\end{lemma}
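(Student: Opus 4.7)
My proof plan is by contradiction, exploiting the monotonicity established in Lemma \ref{dotrlemma} together with the integral identity \eqref{eq_int}.

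Since, by Lemma \ref{dotrlemma}, $\dot r(t)>0$ for all $t>T_0$, the function $r$ is strictly increasing on $[T_0,+\infty)$, so it has a limit $L\in(0,+\infty]$ as $t\to+\infty$. I will assume for contradiction that $L<+\infty$, so that $r$ is bounded above on $\R_+$, and combined with the lower bound $r\geq m$ from the previous lemma, $r$ stays in a fixed compact subinterval of $(0,+\infty)$.

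From the integral identity \eqref{eq_int}, rewritten as
\[
\dot r(t)^2 + \frac{\alpha_0^2}{r(t)^2} + 2\mu\int_0^t\dot r(s)^2\,ds \;=\; \dot r(0)^2 + \alpha_0^2 + 4\lambda\alpha_0\log r(t),
\]
the right-hand side stays bounded (since $r$ is bounded above and below), so I obtain at once that $\dot r$ is bounded and that $\int_0^\infty \dot r(s)^2\,ds<+\infty$. Going back to equation \eqref{eqr} and using the lower bound on $r$ together with the boundedness of $\dot r$, I deduce that $\ddot r$ is bounded on $\R_+$; hence $(\dot r^2)'=2\dot r\ddot r$ is bounded, so $\dot r^2$ is Lipschitz, hence uniformly continuous. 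Lemma \ref{lemma_unif_cont} then forces $\dot r(t)^2\to 0$, i.e.\ $\dot r(t)\to 0$ as $t\to+\infty$.

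Now I pass to the limit in \eqref{eqr}. Since $r(t)\to L<+\infty$ and $\dot r(t)\to 0$,
\[
\ddot r(t) \;\longrightarrow\; \frac{\alpha_0^2}{L^3} + \frac{2\lambda\alpha_0}{L} \;>\; 0,
\]
where positivity is crucial and comes from $\lambda>0$ (the defocusing sign). But a function whose derivative tends to a strictly positive limit must itself tend to $+\infty$, contradicting $\dot r(t)\to 0$. Therefore $L=+\infty$, i.e.\ $r(t)\to+\infty$, as claimed.

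The only genuinely delicate point is closing the contradiction: one needs both monotonicity (to guarantee that the limit $L$ exists) and the sign $\lambda>0$ (so that the limiting acceleration is strictly positive rather than zero). Everything else is a clean combination of the integral identity and the uniform-continuity lemma already used in the focusing case.
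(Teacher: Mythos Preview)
Your proof is correct and follows essentially the same route as the paper: establish that $r$ has a finite limit $L$ by monotonicity, use the integral identity \eqref{eq_int} together with Lemma~\ref{lemma_unif_cont} to force $\dot r(t)\to 0$, then pass to the limit in \eqref{eqr} to see that $\ddot r(t)$ tends to a strictly positive constant. Your final contradiction (that $\ddot r\to c>0$ forces $\dot r\to+\infty$) is phrased slightly differently from the paper's (which argues that $\dot r$ increasing with limit $0$ forces $\dot r<0$, contradicting Lemma~\ref{dotrlemma}), but the two closures are equivalent and yours is arguably more direct.
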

\begin{proof}
According to the previous lemma, we know that $t$ is non-decreasing from a time $T_0$ on, so $r$ is either diverging to $+\infty$, either converging. We assume by contradiction that there exists $\ell > 0$ (as $r \geq m >0$) such that $r(t) \rightarrow \ell$ when $t \rightarrow +\infty$.\\
We assume, still by contradiction, that $\dot{r}(t) \cancel{\rightarrow} 0$ at $+\infty$. As $\dot{r}^2$ is positive and uniformly continuous, we have
\[ \int_0^t \dot{r}(s)^2 ds \rightarrow +\infty, \]
and as $r$ converges by hypothesis, we get from \eqref{eq_int} that $r(t)^2 \rightarrow -\infty$ when $t \rightarrow +\infty$, which is obviously impossible, so $\dot{r}(t) \rightarrow 0$. \\
Then we deduce that
\[ \ddot{r}(t) \rightarrow \frac{\alpha_0^2}{\ell^3}+\frac{2 \lambda \alpha_0}{\ell} >0, \]
so $\exists T \geq 0$ such that $\forall t \geq T$, $\ddot{r}(t) > 0$, $ie$ $\dot{r}$ is increasing on $\left[T,+\infty \right[$. However we saw that $\dot{r}(t) \rightarrow 0$, so $\forall t \geq T$, $\dot{r}(t) > 0$, which is in contradiction with the previous lemma. We finally conclude that $r(t) \rightarrow +\infty$.
\end{proof}

\begin{lemma}
There exists $T_1 \geq 0$ such that for all $t \geq T_1$, $\ddot{r}(t) \leq 0$. \label{ddotrlemma}
\end{lemma}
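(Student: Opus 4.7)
The plan is to show that $\ddot{r}$ cannot oscillate in sign on $(T_0,\infty)$: differentiating the ODE will show that a zero of $\ddot{r}$ can only be a downward crossing, so once $\ddot{r}$ becomes non-positive it stays so; then I rule out the alternative that $\ddot{r}$ stays strictly positive forever by using the ODE together with Lemma \ref{dotrlemma} and the fact that $r(t) \to +\infty$.

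\emph{Step 1: Sign at zeros of $\ddot{r}$.} Differentiating \eqref{eqr} gives
\[
\dddot{r} = -\frac{3\alpha_0^2 \dot{r}}{r^4} - \frac{2\lambda\alpha_0 \dot{r}}{r^2} - \mu \ddot{r}.
\]
For any $t^* > T_0$ with $\ddot{r}(t^*)=0$, Lemma \ref{dotrlemma} gives $\dot{r}(t^*)>0$, and since $\lambda>0$, the formula above yields
\[
\dddot{r}(t^*) = -\dot{r}(t^*)\left(\frac{3\alpha_0^2}{r(t^*)^4}+\frac{2\lambda\alpha_0}{r(t^*)^2}\right) < 0.
\]
Thus $\ddot{r}$ is strictly decreasing at any zero in $(T_0,\infty)$: it can only cross zero from positive to negative values. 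In particular, if $\ddot{r}(t_1)\leq 0$ for some $t_1>T_0$, then $\ddot{r}(t)\leq 0$ for all $t\geq t_1$.

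\emph{Step 2: $\ddot{r}$ must become non-positive.} Suppose for contradiction that $\ddot{r}(t)>0$ for all $t>T_0$. Then $\dot{r}$ is strictly increasing on $(T_0,\infty)$, and since $\dot{r}>0$ on this interval by Lemma \ref{dotrlemma}, the limit $L:=\lim_{t\to+\infty}\dot{r}(t)$ exists in $(0,+\infty]$ and in particular $L>0$. On the other hand, rewriting \eqref{eqr} as
\[
\mu \dot{r}(t) = \frac{\alpha_0^2}{r(t)^3} + \frac{2\lambda\alpha_0}{r(t)} - \ddot{r}(t) < \frac{\alpha_0^2}{r(t)^3} + \frac{2\lambda\alpha_0}{r(t)},
\]
and using $r(t)\to +\infty$ from the previous lemma, the right-hand side tends to $0$, forcing $L=0$, a contradiction. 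So there exists a finite $T_1>T_0$ with $\ddot{r}(T_1)\leq 0$, and Step 1 then yields $\ddot{r}(t)\leq 0$ for all $t\geq T_1$.

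The only slightly delicate point is Step 2, but the contradiction is clean: the monotonicity of $\dot{r}$ would make it stay bounded away from zero, while the ODE constrains $\dot{r}$ from above by a quantity that vanishes at infinity because $r$ diverges. Everything else is a direct consequence of the sign information already established in the previous lemmas.
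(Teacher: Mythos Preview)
Your proof is correct and follows essentially the same strategy as the paper: the key observation that $\dddot{r}(t^*)<0$ at any zero $t^*>T_0$ of $\ddot{r}$, hence $\ddot{r}$ can only cross zero downward, is identical. Your Step~2 is in fact a little more streamlined than the paper's Case~2: instead of first establishing that $\dot{r}$ is bounded and then deducing $\ddot{r}(t)\to -\mu\ell<0$, you use the inequality $\mu\dot{r}(t)<\alpha_0^2/r(t)^3+2\lambda\alpha_0/r(t)\to 0$ directly to contradict the fact that $\dot{r}$ is increasing and bounded below by a positive constant.
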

\begin{proof}
\underline{Case 1:} $\ddot{r}(T_0) \leq 0$. \\
Then we can take $T_1 = T_0$. We assume that there exists $T \geq T_1$ such that $\ddot{r}(T)=0$, then
\[ \dddot{r}(T)=- \frac{3 \alpha_0^2 \dot{r}(T)}{r(T)^4} - \frac{2 \lambda \alpha_0\dot{r}(T)}{r(t)^2} - \mu \ddot{r}(T)  =- \left( \frac{3 \alpha_0^2 }{r(T)^4} - \frac{2 \lambda \alpha_0}{r(T)^2} \right) \dot{r}(T), \]
but $T_1 \geq T_0$ so $\dot{r}(T) > 0$, and so $\dddot{r}(T) < 0$. We deduce that there exists $\eps >0$ small enough such that $\forall t \in \left] T,T+ \eps \right[$, $\ddot{r}(t)<0$. We have just shown that the set $\enstq{t \geq T_1}{\ddot{r}(t) \leq 0}$ is open, and it is clearly closed as $\ddot{r}$ is continuous, and non-empty because $T_0$ belongs to it, so we see that $\forall t \geq T_1$, $\ddot{r}(t) \leq 0$ as $\left[ T_1, +\infty \right[$ is a connected space. \\

\underline{Case 2:} $\ddot{r}(T_0) > 0$. \\
Rewriting this condition with equation \eqref{eqr}, we have:
\[ \mu \dot{r}(T_0) < \frac{\alpha_0^2}{r(T_0)^3} + \frac{2 \lambda \alpha_0}{r(T_0)}. \]
As taking $T_0=T_0 + \eta $ with $\eta$ small enough, we can assume that $\dot{r}(T_0) >0$ and that $\forall t \geq T_0$, $\dot{r}(t) >0$. First of all, we show that $\dot{r}$ is bounded. We denote 
\[ t_{\min}= \inf \enstq {t \geq T_0}{\mu \dot{r}(t)=\frac{\alpha_0^2}{r(T_0)^3} + \frac{2 \lambda \alpha_0}{r(T_0)}}.  \]
We assume (by contradiction) that $t_{\min} < \infty$. We know that $\forall t \geq T_0$, $\dot{r}(t) >0$, so $r$ is increasing on $\left[ T_0,+\infty \right[$. We deduce that
\[ \mu \dot{r}(t_{\min})= \frac{\alpha_0^2}{r(T_0)^3} + \frac{2 \lambda \alpha_0}{r(T_0)}  > \frac{\alpha_0^2}{r(t_{\min})^3} + \frac{2 \lambda \alpha_0}{r(t_{\min})},  \]
so that $\ddot{r}(t_{\min}) < 0$. However $t_{\min}$ is the smallest time $t$ such that $\mu \dot{r}(t) = \alpha_0^2/r(T_0)^3+2\lambda \alpha_0/r(T_0)$, then for all $t < t_{\min}$, $\dot{r}(t) < \dot{r}(t_{\min})$, and
\[ \frac{\dot{r}(t_{\min}) - \dot{r}(t)}{t_{\min}-t} >0, \]
so we have $\ddot{r}(t_{\min}) \geq 0$ when $t\rightarrow t_{\min}$, hence the contradiction. We can conclude that $t_{\min}=+\infty$, so $\forall t \geq T_0$, 
\[ \dot{r}(t) \leq \frac{1}{\mu} \left( \frac{\alpha_0^2}{r(T_0)^3} + \frac{2 \lambda \alpha_0}{r(T_0)} \right), \]
which means that $\dot{r}$ is bounded. \\
We now assume, still by contradiction, that $\forall t \geq T_0$, $\ddot{r}(t) >0$, so $\dot{r}$ is increasing, and since $\dot{r}$ is bounded, it converges to a real $\ell > 0$ (because $\dot{r}(T_0)>0$). As $r(t) \rightarrow +\infty$, by \eqref{eqr}, we have
\[ \ddot{r}(t) \underset{t\to+\infty}{\longrightarrow} -\mu \ell < 0, \]
leading to an absurdity. So there exists a time $T_1 \geq T_0$ such that $\ddot{r}(T_1) \leq 0$. Then we conclude like the Case 1.

\end{proof}

\begin{proposition} \label{prop_r_defocusing}
As  $t \rightarrow + \infty$, we have
\[ r(t) \sim 2\sqrt{\frac{\lambda \alpha_0}{\mu}t}.  \]
\end{proposition}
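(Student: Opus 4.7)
The plan is to reduce the problem to a linear first-order equation for $r^2$ via a well-chosen change of unknown, after first establishing that $\dot r$ vanishes at infinity.

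\textbf{Step 1 (Decay of $\dot r$).} By Lemma \ref{dotrlemma}, $\dot r(t) > 0$ for all $t > T_0$, and by Lemma \ref{ddotrlemma}, $\ddot r(t) \leq 0$ for all $t \geq T_1$. Thus $\dot r$ is positive and non-increasing on $\left[ \max(T_0,T_1),+\infty\right[$, so it has a finite nonnegative limit $\ell \geq 0$. Assume for contradiction $\ell > 0$. Since $r(t) \to +\infty$ (by the previous lemma), passing to the limit in \eqref{eqr} yields $\ddot r(t) \to -\mu \ell < 0$. But then there exists $T_2 \geq T_1$ such that $\ddot r(s) \leq -\mu \ell /2$ for all $s \geq T_2$, and integrating gives $\dot r(t) \leq \dot r(T_2) - \frac{\mu \ell}{2}(t - T_2) \to -\infty$, contradicting $\dot r(t) > 0$. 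Hence $\ell = 0$.

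\textbf{Step 2 (Linearization via $y = r^2$).} Set $y(t) := r(t)^2$, so $\dot y = 2 r \dot r$ and $\ddot y = 2 \dot r^2 + 2 r \ddot r$. Multiplying \eqref{eqr} by $2r$ and substituting, one finds
\begin{equation*}
\ddot y(t) + \mu \dot y(t) \;=\; 2 \dot r(t)^2 + \frac{2 \alpha_0^2}{r(t)^2} + 4 \lambda \alpha_0 \;=:\; g(t).
\end{equation*}
Since $\dot r(t) \to 0$ (Step 1) and $r(t) \to +\infty$, the right-hand side satisfies $g(t) \to 4 \lambda \alpha_0$ as $t \to +\infty$, and $g$ is bounded on $\left[0,+\infty\right[$.

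\textbf{Step 3 (Asymptotic of $\dot y$).} Multiplying the linear ODE $\ddot y + \mu \dot y = g$ by $e^{\mu t}$ and integrating,
\begin{equation*}
\dot y(t) \;=\; e^{-\mu t} \dot y(0) + e^{-\mu t} \int_0^t e^{\mu s} g(s)\, ds.
\end{equation*}
The first term vanishes as $t \to \infty$. For the second term, L'Hôpital's rule applies to the indeterminate form $\infty/\infty$ and yields
\begin{equation*}
\lim_{t \to +\infty} \frac{\int_0^t e^{\mu s} g(s) \, ds}{e^{\mu t}} \;=\; \lim_{t \to +\infty} \frac{g(t)}{\mu} \;=\; \frac{4 \lambda \alpha_0}{\mu}.
\end{equation*}
Therefore $\dot y(t) \to \frac{4 \lambda \alpha_0}{\mu}$ as $t \to +\infty$.

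\textbf{Step 4 (Conclusion).} Since $\dot y(t)$ has a strictly positive finite limit, a further application of L'Hôpital's rule (or a direct integration) gives $y(t)/t \to 4\lambda\alpha_0/\mu$, i.e.\ $r(t)^2 \sim \frac{4\lambda\alpha_0}{\mu} t$, whence $r(t) \sim 2\sqrt{\lambda \alpha_0 t/\mu}$.

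The only delicate point is Step 1, which requires combining the sign information from Lemmas \ref{dotrlemma} and \ref{ddotrlemma} with the pointwise limit in the ODE to rule out a positive limit for $\dot r$; everything downstream is a straightforward consequence of the linear structure of the equation satisfied by $y = r^2$.
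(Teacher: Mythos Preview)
Your proof is correct and takes a genuinely different route from the paper's. The paper proceeds by a direct comparison argument: from $\ddot r\le 0$ on $[T_1,\infty)$ it deduces $\mu\dot r\ge 2\lambda\alpha_0/r$, then compares $r$ with the explicit solution $g$ of $\mu\dot g=2\lambda\alpha_0/g$ to obtain the lower bound $r(t)\ge 2\sqrt{\lambda\alpha_0 t/\mu}$; the matching upper bound is obtained by integrating \eqref{eqr} in the form $\mu\dot r=\alpha_0^2/r^3+2\lambda\alpha_0/r-\ddot r$ from $T_1$ to $t$, dropping $\dot r(t)\ge 0$, and plugging the lower bound into the integrand. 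Your approach instead first establishes $\dot r\to 0$ (a fact the paper does not isolate at this stage) and then linearizes via $y=r^2$, reducing everything to the elementary asymptotics of a forced linear first-order ODE for $\dot y$ with forcing $g(t)\to 4\lambda\alpha_0$. The paper's method is more hands-on and yields explicit two-sided inequalities valid from $T_1$ on, which it later reuses in the proof of Proposition~\ref{prop_dot_r_defocusing}. Your method is slicker and, as a bonus, gives $\dot y=2r\dot r\to 4\lambda\alpha_0/\mu$ directly, from which Proposition~\ref{prop_dot_r_defocusing} follows in one line without the separate upper/lower estimates the paper carries out there.
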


\begin{proof}
We know from the previous lemma that $\forall t \geq T_1$, $\ddot{r}(t) \leq 0$, so 
\[ \frac{ \alpha_0^2}{r^3(t)} + \frac{ 2 \lambda \alpha_0}{r(t)} \leq \mu \dot{r}(t), \]
and $ \alpha_0^2/r^3(t)\geq 0$, so $\mu \dot{r}(t) \geq \frac{ 2 \lambda \alpha_0}{r(t)} $. We denote by $g$ the solution of the differential equation
\begin{equation} \mu \dot{g} = \frac{ 2 \lambda \alpha_0}{g} \label{eqdiffy}, \end{equation}
with $g(0)=r(0)$, so that $\forall t \geq T_1$, $r(t) \geq g(t)$. Solving \eqref{eqdiffy}, we get the lower bound:
\begin{equation} \label{mino} r(t) \geq g(t) =\sqrt{ \frac{ 4 \lambda \alpha_0}{\mu} t +g(0)^2} \geq  2\sqrt{\frac{\lambda \alpha_0}{\mu}t}. \end{equation}

We now need an upper bound for $r$. We rewrite the equation \eqref{eqr} under the form $\mu \dot{r} = \alpha_0^2/r^3 +  2 \lambda \alpha_0/r - \ddot{r}$, then by integrating between the times $T_1$ and $t$, we have
\[ \mu r(t) = \mu r(T_1) + \int_{T_1}^t \left( \frac{ \alpha_0^2}{r^3(s)} + \frac{ 2 \lambda \alpha_0}{r(s)}  \right) ds  - \dot{r}(t) + \dot{r}(T_1) \leq  \mu r(T_1) + \dot{r}(T_1) + \int_{T_1}^t \left( \frac{ \alpha_0^2}{r^3(s)} + \frac{ 2 \lambda \alpha_0}{r(s)}  \right) ds , \]
because $\dot{r}(t) \geq 0$. Using the previous lower bound $r$ in order to bound $1/r$ by above, we get
\[ r(t) \leq \frac{r(T_1) + \dot{r}(T_1)}{\mu} + \int_{T_1}^t  \frac{1}{8 \lambda} \sqrt{\frac{ \alpha_0 \mu }{ \lambda s^3}}   ds +\int_{T_1}^t  2\sqrt{\frac{\lambda \alpha_0}{\mu s}}   ds. \]

Recalling that the function $s \mapsto -\frac{3}{2\sqrt{s}}  $ is an anti-derivative of $s \mapsto \frac{1}{\sqrt{s}^3}$, and that the function $s \mapsto 2\sqrt{s}  $ is an anti-derivative of $s \mapsto 1/\sqrt{s}$, we can write that
\begin{equation} \label{majo} r(t) \leq C_{T_1,\lambda,\mu,\alpha_0} +2\sqrt{\frac{\lambda \alpha_0}{\mu}t}, \end{equation}
so by using \eqref{mino} and \eqref{majo}, we get that
\[ \frac{r(t)}{2\sqrt{\frac{\lambda \alpha_0}{\mu}t}} \underset{t\to+\infty}{\longrightarrow} 1, \]
hence the result.
\end{proof}

 From expressions \eqref{expression_psi}, \eqref{expression_a} and \eqref{expression_b}, we can calculate the $L^{\infty}$ norm of our Gaussian solutions:
\[ \| \psi(t) \|_{L^{\infty}} = |b(0)| \exp \left( \frac{1}{2} \int_0^t \Im(a(s)) ds \right) = \frac{|b(0)|}{\sqrt{r(t)}} \sim \sqrt{2}|b(0)| \left( \frac{\mu}{\lambda \alpha_0 t} \right)^{\frac{1}{4}}. \]

In order to express the $H^1$ norm of $\psi$, by the following computations,
\begin{equation*}
\| \nabla \psi(t) \|^2_{L^2}  =  \frac{1}{2} \sqrt{\pi} \frac{|b(t)|^2}{\sqrt{\Re(a(t))}} \frac{|a(t)|^2}{\Re(a(t))} 
                    =  \frac{1}{2} \sqrt{\pi} \frac{|b(0)|^2}{\alpha_0 \sqrt{\alpha_0}} \left( \frac{\alpha_0^2}{r(t)^2} +\dot{r}(t)^2 \right), 
\end{equation*}
we see that we also need to have an equivalent of $\dot{r}$, which is the aim of the following proposition:

\begin{proposition} \label{prop_dot_r_defocusing}
As  $t \rightarrow + \infty$, we have
\[ \dot{r}(t) \sim \sqrt{\frac{\lambda \alpha_0}{\mu t}}.  \]
\end{proposition}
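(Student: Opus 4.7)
I would introduce the auxiliary unknown $\phi(t) := r(t)^2$ and derive a linear second-order ODE that isolates the leading behavior. Computing $\dot\phi = 2r\dot r$ and $\ddot\phi = 2\dot r^2 + 2r\ddot r$, then multiplying equation \eqref{eqr} by $2r$ and substituting, one gets
\[
\ddot\phi + \mu\dot\phi \;=\; 4\lambda\alpha_0 \;+\; 2\dot r^2 \;+\; \frac{2\alpha_0^2}{r^2},
\]
so $\dot\phi$ satisfies a first-order linear ODE driven by a source that should tend to $4\lambda\alpha_0$.

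Next I would show that $\dot r(t) \to 0$ as $t \to \infty$. From Lemma \ref{dotrlemma} and Lemma \ref{ddotrlemma}, for $t \geq T_1$ the function $\dot r$ is non-negative and non-increasing, hence converges to some $L \geq 0$. If $L > 0$ then $r(t)/t \to L$, contradicting the equivalent $r(t) \sim 2\sqrt{\lambda\alpha_0 t/\mu}$ from Proposition \ref{prop_r_defocusing}. Thus $\dot r(t) \to 0$, and since also $r(t) \to +\infty$, the right-hand side of the ODE for $\phi$ converges to $4\lambda\alpha_0$.

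Applying the variation-of-constants formula to $\dot\phi$,
\[
\dot\phi(t) \;=\; e^{-\mu(t-T_1)}\dot\phi(T_1) \;+\; \int_{T_1}^t e^{-\mu(t-s)}\!\left( 4\lambda\alpha_0 + 2\dot r(s)^2 + \frac{2\alpha_0^2}{r(s)^2} \right)ds,
\]
the transient term vanishes and the standard fact that $\int_{T_1}^t e^{-\mu(t-s)} g(s)\,ds \to \ell/\mu$ whenever $g(s)\to\ell$ yields $\dot\phi(t) \to 4\lambda\alpha_0/\mu$. Translating back, $r(t)\dot r(t) \to 2\lambda\alpha_0/\mu$, and dividing by the equivalent $r(t) \sim 2\sqrt{\lambda\alpha_0 t/\mu}$ finishes the proof:
\[
\dot r(t) \;\sim\; \frac{2\lambda\alpha_0/\mu}{2\sqrt{\lambda\alpha_0 t/\mu}} \;=\; \sqrt{\frac{\lambda\alpha_0}{\mu t}}.
\]

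The only delicate step is the first one: justifying $\dot r \to 0$, but monotonicity (from $\ddot r \leq 0$ eventually) combined with the already-known growth rate $r \sim \sqrt{t}$ makes it immediate. Everything else is a clean linear ODE analysis, which is why the ansatz $\phi = r^2$ (turning the nonlinear friction equation into one with an asymptotically constant source) is the key simplification.
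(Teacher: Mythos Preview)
Your argument is correct and takes a genuinely different route from the paper's proof.

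The paper establishes the equivalent by a two-sided estimate: a lower bound on $\dot r$ comes directly from $\ddot r \leq 0$ for $t\geq T_1$, which in \eqref{eqr} yields $\mu\dot r \geq \alpha_0^2/r^3 + 2\lambda\alpha_0/r$, and then the upper bound $r(t)\leq C+2\sqrt{\lambda\alpha_0 t/\mu}$ from Proposition \ref{prop_r_defocusing} is plugged in. For the upper bound on $\dot r$, the paper applies the integrating factor directly to $\ddot r+\mu\dot r = \alpha_0^2/r^3+2\lambda\alpha_0/r$, bounds the right-hand side using the lower bound $r\geq 2\sqrt{\lambda\alpha_0 t/\mu}$, and then invokes the asymptotic expansion $\int_0^t s^{-1/2}e^{\mu s}\,ds = \mu^{-1}e^{\mu t}(t^{-1/2}+o(t^{-1/2}))$ to close.

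Your substitution $\phi=r^2$ is the cleaner device: it turns the nonlinear friction equation into $\ddot\phi+\mu\dot\phi = 4\lambda\alpha_0 + 2\dot r^2 + 2\alpha_0^2/r^2$, whose source is asymptotically \emph{constant}. Once you know $\dot r\to 0$ (immediate from eventual monotonicity and $r\sim C\sqrt t$) and $r\to\infty$, a single application of variation of constants gives $\dot\phi\to 4\lambda\alpha_0/\mu$, i.e.\ $r\dot r\to 2\lambda\alpha_0/\mu$, and dividing by the known equivalent of $r$ finishes. This avoids both the separate lower/upper estimates and the Laplace-type integral asymptotics the paper uses; conversely, the paper's approach never needs the auxiliary step of first proving $\dot r\to 0$.
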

\begin{proof}
We already know from the previous proposition that there exists a generic constant $C>0$ such that
\[ r(t) \leq C +2\sqrt{\frac{\lambda \alpha_0}{\mu}t}, \]
and also that $\ddot{r}(t) \leq 0$ for all $t \geq T_1$, so using equation \eqref{eqr} we get that
\[  \mu \dot{r}(t) \geq \frac{\alpha_0^2}{\left( C+2\sqrt{\frac{\lambda \alpha_0 t}{\mu}} \right)^3} + \frac{2 \lambda \alpha_0}{C+2\sqrt{\frac{\lambda \alpha_0 t}{\mu}} } \geq \frac{\lambda \alpha_0}{ \sqrt{\frac{\lambda \alpha_0 t}{\mu}}}, \]
hence
\[ \dot{r}(t) \geq \sqrt{\frac{\lambda \alpha_0}{\mu t}}. \]
In order to get an upper bound, we recall that
\[ r(t) \geq 2 \sqrt{\frac{\lambda \alpha_0 t}{\mu}}. \]
Then, still using equation \eqref{eqr}, we get that
\[ \ddot{r}+\mu \dot{r} = \frac{ \alpha_0^2}{r^3} + \frac{ 2 \lambda \alpha_0}{r} \leq \frac{\sqrt{\alpha_0 \mu }}{8\sqrt{ \lambda  t}^3} +\sqrt{\frac{\lambda \alpha_0 \mu }{t}} =: g(t). \]
We can rewrite the previous inequality as
\[ \frac{d}{dt} \left( \dot{r}(t) e^{\mu t} \right) \leq g(t) e^{\mu t}, \]
so we get
\[ \dot{r}(t) \leq \dot{r}(0)e^{-\mu t} + e^{-\mu t}\int_0^t g(s) e^{\mu s} ds  .\]
Finally, thanks to the asymptotic expansion
\[  \int_0^t \frac{1}{\sqrt{s}} e^{\mu s} ds = \frac{1}{\mu} e^{\mu t} \left( \frac{1}{\sqrt{t}} + o \left( \frac{1}{\sqrt{t}} \right) \right) ,\]
we get the estimate
\[ \dot{r}(t) \leq \sqrt{\frac{\lambda \alpha_0}{\mu t}}+ o \left( \frac{1}{\sqrt{t}} \right)  \]
which ends the proof.
\end{proof}

Thanks to the previous proposition, we finally get
\[ \| \nabla \psi(t) \|_{L^2} \sim  C_{\alpha_0,\mu,\lambda} |b(0)| \frac{1}{\sqrt{t}}.  \]

\begin{remark} \label{remark_d}
All the results above can easily be generalized to any dimension $d$. In fact, if we now consider Gaussian functions of the form
\begin{equation}
 \psi(t,x) = b(t) \exp\left(-\frac{1}{2} \sum_{j=1}^d a_j(t) x_j^2 \right), \label{expression_psi_d}
 \end{equation}
 then plugging it into \eqref{NMeq} gives the system
 \begin{align}
    i \dot{b} - \frac{1}{2} b \sum_{j=1}^d a_j  = \lambda b \log(|b|^2)- \frac{1}{2} i\mu b  \log\left(\frac{b}{b^*} \right), \label{eqb1_d} \\
    i \dot{a_j}- a_j^2  = 2 \lambda \Re(a_j) + \mu \Im(a_j), \ \ \ j=1,...,d \label{eqa1_d}
\end{align}   
which is the $d$-dimensional analogue of system $\eqref{eqb1}-\eqref{eqa1}$. Using equation \eqref{eqb1_d} we get the explicit formulation of $b$ with respect to $(a_j)_{1 \leq j \leq d}$:
\begin{equation}
|b(t)|=|b(0)| \exp \left( \frac{1}{2} \sum_{j=1}^d\Im A_j(t) \right) \label{expression_b_d}
\end{equation}
where
\[A_j(t) := \int_0^t a_j(s) ds.\]

As the $d$ equations \eqref{eqa1_d} are decoupled as $j$ varies, we are now back to the study of equation \eqref{eqr} on each independent dimension component.

\end{remark}

\section{Long-time behavior}

\subsection{Focusing case}

Before going for the proof of Theorem \ref{theorem1}, a first look at the equation, as suggested in \cite{chavanis2017}, would be the study of time-independent solutions of equation \eqref{NMeq} of the form
\begin{equation} \label{stationary}
    \psi(t,x)=f(x) e^{iS(t)}.
\end{equation}It is already well known since 1976 \cite{birula1976} (see also \cite{cazenave1983}) that standing waves of the form $f(x) e^{i \omega t}$ are the stationary solutions for the logarithmic Schrödinger equation, where $\omega \in \R$ and $f \in W(\R^d)$ stands as a solution of the semilinear elliptic equation
\begin{equation} \label{elliptic}
    - \frac{1}{2} \Delta f + \omega f + \lambda f \log |f|^2 =0.
\end{equation}
In our case, dealing with the dissipation term $\log( \psi/\psi^*)$, standing waves are no longer relevant, and stationary solutions are rather of the form \eqref{stationary} with 
\begin{equation} \label{stationary_phase}
    S_{\omega}(t)= \frac{\omega}{\mu} + e^{-\mu t}
\end{equation}  
and $f$ still a solution of \eqref{elliptic} belonging to $W(\R^d)$. Moreover, it is already known (see \cite{birula1976}) in the case $\lambda <0$ that the Gausson
\begin{equation} \label{gausson}
    \phi_{\omega}(x):= e^{\frac{\omega}{2\lambda}+d} e^{\lambda |x|^2}
\end{equation}
solves equation \eqref{elliptic} for any dimension $d$, and up to translations, it is the unique strictly positive $\mathcal{C}^2$-solution for \eqref{elliptic} such that $f(x) \rightarrow 0$ as $|x| \rightarrow \infty$ (see \cite{ardila2016}).\\
Ardila recently showed in \cite{ardila2016} the orbital stability of the Gausson by minimizing some energy functionals. As suggested by the study of the Gaussian case, we will show here that every smooth solution of our system \eqref{continuity}-\eqref{fluid} tends to the Gausson at large times, using both energy minimization and dynamical system theory, as well as the additional regularity and tightness assumptions made in Section 2. \\


 \textit{Proof of Theorem \ref{theorem1}.} We recall that $(\rho,u)$ is an energy weak solution of system \eqref{continuity}-\eqref{fluid}, and we also assume that $\sqrt{\rho} \in L^{\infty}(\R_+, H^2(\R^d))$. We remark that the dissipation term in \eqref{energy} being a multiple of the kinetic energy, we have the following differential equation:
\begin{equation}
    \dot{E}_c(t)+\dot{E}_p(t)=-2 \mu E_c(t) \leq 0.
\end{equation}
By LaSalle’s invariance principle \cite{lasalle1968}, the $\omega$-limit set of any solution of \eqref{eq_energy} is thus reduced to the set
\[  \Omega= \enstq{(\rho, u)}{\dot{E}_c(\rho,u)+\dot{E}_p(\rho,u)=0} = \enstq{(\rho, u)}{E_c(\rho,u)=0} \]
by the previous equation.\\
Denote $(\rho, u)$ a solution to \eqref{continuity}-\eqref{fluid}. Taking a sequence $t_n \rightarrow +\infty$,  we denote 
\[ \rho_n(t,x)=\rho(t+t_n,x) \ \ \ \text{and} \ \ \ u_n(t,x)=u(t+t_n,x) \]
for all $t \in (0,1)$ and $x \in \R^d$. From \eqref{mass_conservation} we get that $(\rho_n)_n$ is bounded in $L^1(\R^d)$, and from \eqref{eq_energy} we get that for all $t \geq 0$,
\[ \sup_{n} \int_{\R^d} \rho_n |\log( \rho_n)|  < \infty.   \]
In fact, writing
\[ \int_{\R^d} \rho_n |\log( \rho_n)|  = \int_{\rho_n > 1} \rho_n |\log( \rho_n)|  - \int_{\rho_n < 1} \rho_n |\log( \rho_n)| ,  \]
we get that
\[ \int_{\rho_n > 1} \rho_n |\log( \rho_n)| \leq \int_{\R^d} \sqrt{\rho_n}^{2+\eps}  \lesssim \| \sqrt{\rho_n} \|_{L^2(\R^d)}^{(2+\eps)(1-\alpha)} \| \nabla \sqrt{\rho_n} \|_{L^2(\R^d)}^{(2+\eps)\alpha}
 \lesssim \| \nabla \sqrt{\rho_n} \|_{L^2(\R^d)}^{\frac{\eps}{2}} 
 \]
by Gagliardo-Nirenberg inequality with $\alpha = 1/2 - 1/(2+\eps) >0$, for small $\epsilon > 0$, so $\int \rho_n |\log( \rho_n)| $ is uniformly bounded using the energy estimate \eqref{eq_energy}. Then, by de la Vallée-Poussin theorem (see \cite{meyer1966}),  $(\rho_n)_n$ is equi-integrable, and so it converges weakly (up to a subsequence) by Dunford-Pettis theorem (see e.g.\cite{dunford1938}):
\[ \rho_n \underset{n\to+\infty}{\rightharpoonup} \rho_{\infty} \ \ \ \text{weakly in} \ L^1(\R^d).  \]
Furthermore, using Assumption \ref{assumption_tightness} we get that the sequence $(\rho_n)_n$ is tight in $L^1(\R^d)$, so
\begin{equation} \label{L1_rho_infty}
    \| \rho_{\infty} \|_{L^1(\R^d)}=\| \rho_0 \|_{L^1(\R^d)}.
\end{equation}
From LaSalle’s invariance principle we get that $E_c( \rho_n, u_n) \rightarrow 0$, $ie$
\[  \| \sqrt{\rho_n}u_n \|_{L^2(\R^d)} \underset{n\to+\infty}{\longrightarrow} 0. \]

Multiplying equation \eqref{continuity} by a test function $\varphi \in \mathcal{D}( (0,1) \times \R^d)$ and integrating over space and time, we get by integrating by parts
\[ \int_0^1 \int_{\R^d} \left( \rho_n \partial_t \varphi + \rho_n u_n \cdot \nabla \varphi \right) dx dt =0.  \]
By dominated convergence using equation \eqref{mass_conservation} and from weak convergence of $(\rho_n)_n$ we get that
\[  \int_0^1 \int_{\R^d} \rho_n \partial_t \varphi dx dt \underset{n\to+\infty}{\longrightarrow} \int_0^1 \int_{\R^d} \rho_{\infty} \partial_t \varphi . \]
Furthermore, writing $\rho_n u_n = \sqrt{\rho_n} \sqrt{\rho_n} u_n$, we get, still by dominated convergence using equation \eqref{energy}, that
\[  \left| \int_0^1 \int_{\R^d}\sqrt{\rho_n} \sqrt{\rho_n} u_n \cdot \nabla \varphi dx dt \right| \leq C_{\varphi} \| \sqrt{\rho_0} \|_{L^2(\R^d)} \int_0^1 \| \sqrt{\rho_n}u_n \|_{L^2(\R^d)} dt \underset{n\to+\infty}{\longrightarrow} 0. \]
So we have that in the distribution sense,
\begin{equation}
  \partial_t \rho_{\infty} =0.  \label{continuity_infty} 
\end{equation}
Now multiplying equation \eqref{fluid} by a test function $\varphi \in \mathcal{D}( (0,1) \times \R^d)$ and integrating over space and time, we get by integrating by parts:
\begin{multline*}
 \int_0^1 \int_{\R^d} \left( \rho_n u_n \partial_t \varphi + \rho_n u_n \otimes u_n \nabla \varphi + \lambda \rho_n \Div \varphi - \mu \rho_n u_n \varphi \right) dx dt \\
 = \int_0^1 \int_{\R^d} \left( \frac{1}{4} \rho_n \Div \Delta  \varphi - \nabla \sqrt{\rho_n} \otimes \nabla \sqrt{\rho_n} \nabla \varphi \right) dx dt.
 \end{multline*}
Denoting $\rho_n u_n \otimes u_n = \sqrt{\rho_n} u_n \otimes \sqrt{\rho_n} u_n$, by the same arguments as above, we get that
\[  \int_0^1 \int_{\R^d} \left( \rho_n u_n \partial_t \varphi + \sqrt{\rho_n} u_n \otimes \sqrt{\rho_n} u_n \nabla \varphi + \lambda \rho_n \Div \varphi - \mu \rho_n u_n \varphi \right) dx dt \underset{n\to+\infty}{\longrightarrow} \int_0^1 \int_{\R^d} \lambda \rho_{\infty} \Div \varphi. \] 
From weak convergence of $(\rho_n)_n$ we get that 
\[ \int_0^1 \int_{\R^d} \rho_n \Div \Delta  \varphi dx dt \underset{n\to+\infty}{\longrightarrow}   \int_0^1 \int_{\R^d} \rho_{\infty} \Div \Delta  \varphi dx dt,\]
so the only remaining term is
\[  \int_0^1 \int_{\R^d} \nabla \sqrt{\rho_n} \otimes \nabla \sqrt{\rho_n} \nabla \varphi  dx dt \underset{n\to+\infty}{\longrightarrow} \int_0^1 \int_{\R^d} \nabla \sqrt{\rho_{\infty}} \otimes \nabla \sqrt{\rho_{\infty}} \nabla \varphi  dx dt .\]
In fact, we recall that from \eqref{energy} we get that $\nabla \sqrt{\rho_n}$ is uniformly bounded in $L^2$, and so
\[ \nabla \sqrt{\rho_n} \underset{n\to+\infty}{\rightharpoonup}  \nabla \sqrt{\rho_{\infty}}  \ \ \ \text{weakly in} \ L^2(\R^d).   \]
As $f \mapsto \int_0^1 \int_{\R^d} f \otimes \nabla \sqrt{\rho_{\infty}} \nabla \phi$ is a linear form on $L^2(\R^d)$, we have
\[ \left|\int_0^1 \int_{\R^d} \left(\nabla \sqrt{\rho_n} -\nabla \sqrt{\rho_{\infty}}  \right)\otimes \nabla \sqrt{\rho_{\infty}} \nabla \varphi  dx dt \right| \underset{n\to+\infty}{\longrightarrow} 0. \]
Then, using \eqref{energy} and the additional assumption that $\sqrt{\rho_n} \in L^{\infty}(\R_+, H^2(\R^d))$, we get that
\begin{multline*}
\left|\int_0^1 \int_{\R^d} \left(\nabla \sqrt{\rho_n} -\nabla \sqrt{\rho_{\infty}}  \right)\otimes \nabla \sqrt{\rho_{\infty}} \nabla \varphi  dx dt \right| \\ 
\leq \|\nabla \sqrt{\rho_{\infty}}\|_{L^2(\R^d)}  \int_0^1 \int_{\R^d} \left| \nabla \sqrt{\rho_n} -\nabla \sqrt{\rho_{\infty}}  \right|^2  |\nabla \varphi|^2 dx dt   \underset{n\to+\infty}{\longrightarrow} 0, 
\end{multline*}
since $H^2(\R^d)$ is compactly embedded into $H^1_{\loc}(\R^d)$, and recalling that $|\nabla \varphi|^2$ has a compact support, so finally
\[ \left|\int_0^1 \int_{\R^d} \nabla \sqrt{\rho_n} \otimes \nabla \sqrt{\rho_n} \nabla \varphi  dx dt -  \int_0^1 \int_{\R^d} \nabla \sqrt{\rho_{\infty}} \otimes \nabla \sqrt{\rho_{\infty}} \nabla \varphi  dx dt \right| \underset{n\to+\infty}{\longrightarrow} 0, \]
so we have the result. Under Assumption \ref{assumption_tightness}, we now get that $\rho_n$ converges weakly in $L^1$ to $\rho_{\infty}$ which is solution of equation \eqref{continuity_infty} and
\begin{equation}
 \lambda \nabla \rho_{\infty} = \frac{1}{2} \rho_{\infty} \nabla \left( \frac{\Delta \sqrt{\rho_{\infty}}}{\sqrt{\rho_{\infty}}} \right).  \label{fluid_infty}
\end{equation} 
As mentioned in \cite{jungel2012}, assuming $\rho_{\infty} >0$, we can divide equation \eqref{fluid_infty} by $\rho_{\infty}$. Then, integrating over $\R^d$ and multiplying by $f:=\sqrt{\rho_{\infty}}$, equation \eqref{fluid_infty} is linked to the already mentioned second-order nonlinear elliptic equation:
\[ -\frac{1}{2}\Delta f + \omega f +  \lambda f \log |f|^2 =0, \]
where $\omega$ denotes the integration constant. From \cite{davenia2014} we know that every solution $f$ of \eqref{elliptic} such that $f \in L^1_{\loc}(\R^d)$ and $\Delta f \in L^1_{\loc}(\R^d)$ in the sense of distribution is either trivial or strictly positive. \\
In our case, $\rho_{\infty} >0$ so by standard elliptic regularity arguments, $f$ is $\mathcal{C}^2(\R^d)$, and from \cite{davenia2014} we infer that, up to translations, $f$ is equal to the Gausson \eqref{gausson}. Hence for all $(t_n)_n$, the sequence $(\rho_n)_n$ has the same limit, so we get that 
\begin{equation} \label{weak_convergence_rho}
 \rho(t,.) \underset{t\to+\infty}{\rightharpoonup} \rho_{\infty} \ \ \ \text{weakly in} \ L^1(\R^d),  
\end{equation}
with $\rho_{\infty} = c_{\lambda} e^{\lambda|x|^2}$ up to translations. Furthermore, from the conservation of mass \eqref{L1_rho_infty} we get that $c_{\lambda}=\| \rho_0 \|_{L^1(\R^d)} (-\lambda/\pi)^{d/2}$. We now denote by $x_{\infty}$ the (unique) center of the Gaussian function $\rho_{\infty}$, and for all $R >0$, we call $\xi_R \in \mathcal{C}(\R^d)$ the following smooth function such that $\xi_R =1$ on $\left\{ |x| \leq R \right\}$ and $\xi_R =0$ on $\left\{ |x| \geq 2R \right\}$. In order to determine $x_{\infty}$, we use the weak convergence \eqref{weak_convergence_rho} to infer that, $\forall R >0$,
\[ \int_{\R^d} x \xi_R(x) \rho(t,x)dx \underset{t\to+\infty}{\longrightarrow} \int_{\R^d} x \xi_R(x) \rho_{\infty}(x)dx.   \]
As $\rho_{\infty}$ is a Gaussian function, we get that
\[ \int_{\R^d} x \xi_R(x) dx =  c_{\lambda} \int_{\R^d} x e^{\lambda|x-x_{\infty}|^2}dx  \underset{R\to+\infty}{\longrightarrow} x_{\infty} \| \rho_0\|_{L^1(\R^d)}. \]
What's more, as we know that $\int_{\R^d} |x|^2 \rho(t,x)dx < \infty$ for all $t\geq 0$ by Assumption \ref{assumption_tightness}, we get by dominated convergence theorem that
\[ \int_{\R^d} x  \rho(t,x)dx \underset{t\to+\infty}{\longrightarrow} x_{\infty} \| \rho_0\|_{L^1(\R^d)}, \]
which concludes the proof.

\subsection{Defocusing case}
We are now going to prove Theorem \ref{theorem2}. Following \cite{carles2018}, we define $\phi$ according to the formal solution $\psi$ of \eqref{NMeq}:
\begin{equation} \label{exp_phi}
    \psi(t,x)=\frac{1}{\tau(t)^{d/2}} \phi \left( t,\frac{x}{\tau(t)} \right) \frac{\|\psi_0 \|_{L^2(\R^d)}}{ \|\gamma \|_{L^2(\R^d)} } \exp \left( i \frac{\dot{\tau}(t)}{\tau(t)} \frac{|x|^2}{2} \right),
\end{equation}
where $\gamma := e^{-|x|^2/2}$ denotes the usual centered Gaussian function and $\tau$ is now the unique solution of the differential equation 
\begin{equation*}
\ddot{\tau} = \frac{ 2 \lambda}{\tau} - \mu \dot{\tau}, \ \ \ \tau(0)=1, \ \ \ \dot{\tau}(0)=0.
\end{equation*}
Mimicking the proof of Proposition \ref{prop_r_defocusing} and Proposition \ref{prop_dot_r_defocusing} (with $\alpha_0 =1$), we also get that
\[  \tau(t) \sim 2\sqrt{\frac{\lambda }{\mu}t} \ \ \ \text{and} \ \ \ \dot{\tau}(t) \sim \sqrt{\frac{\lambda }{\mu t}}. \]
In fact, the previous study of equation \eqref{eqr} in the defocusing case shows that the $\alpha_0^2/r^3$ has no influence in the long-time behavior of the solution $r$, so we can get rid of it in the forthcoming calculations. Note that this last remark is no longer true in the focusing case, where the $\alpha_0^2/r^3$ term plays a crucial role in the asymptotic behavior of the solution. \\

Plugging \eqref{exp_phi} into \eqref{NMeq}, we get that $\phi$ satisfy the equation 
\begin{equation*}  i  \partial_t \phi + \frac{1}{2\tau^2}  \Delta \phi = \lambda \phi \log \left( \left|\frac{\phi}{\gamma} \right|^2 \right) + \frac{1}{2i}  \mu  \phi \log\left( \frac{\phi}{\phi^*} \right) -d \lambda \phi \log \tau + 2 \lambda \phi \log \left( \frac{\|\psi_0 \|_{L^2(\R^d)}}{ \|\gamma \|_{L^2(\R^d)} } \right),  \end{equation*} 
where the initial datum for $\phi$ is
\begin{equation} \label{scaled_initial_datum}
    \phi(0)=\phi_0:= \frac{ \|\gamma \|_{L^2(\R^d)} }{\|\psi_0 \|_{L^2(\R^d)}} \psi_0.
\end{equation} 
Using the gauge transform consisting in replacing $\phi$ with $\phi e^{-i \theta(t)}$, where $\theta$ is the unique solution of the linear differential equation 
\[    \dot{\theta} + \mu \theta =-d \lambda \log \tau -2 \lambda \log \phi_0 , \]
we may assume that the last two terms are absent, and we look into the following scaled equation:
\begin{equation}  i  \partial_t \phi + \frac{1}{2\tau^2}  \Delta \phi = \lambda \phi \log \left( \left|\frac{\phi}{\gamma} \right|^2 \right) + \frac{1}{2i}  \mu  \phi \log\left( \frac{\phi}{\phi^*} \right) .  \label{NM_phi_eq} \end{equation} 

Of course all the above formal calculations rigorously stand in the fluid case, so the hydrodynamical formulation of \eqref{NM_phi_eq} is the following:
\begin{subequations} \label{hydro}
\begin{empheq}[left=\empheqlbrace]{align}
& \partial_t \varrho + \frac{1}{\tau^2}\Div \mathcal{J} =0, \label{continuity_phi}  \\
& \partial_t \mathcal{J} + \frac{1}{\tau^2} \Div \left( \frac{\mathcal{J} \otimes \mathcal{J}}{\varrho} \right) + \lambda \nabla \varrho +2 \lambda y \varrho + \mu \mathcal{J}= \frac{1}{2 \tau^2} \varrho \nabla \left( \frac{\Delta \sqrt{\varrho}}{\sqrt{\varrho}} \right), \label{fluid_phi}
\end{empheq} 
\end{subequations}
where $y=x/\tau$. The couple $(\varrho,\mathcal{J})$ induced by Assumption \ref{assumption_reg_defocusing} and the above scaling will be called energy weak solution of \eqref{continuity_phi}-\eqref{fluid_phi}, according to Definition \ref{weak}.\\ \\
Let us now explain the heuristic behind the calculations of the forthcoming proof about the long time behavior of our solutions. Letting $\tau(t) \rightarrow \infty$, equation \eqref{fluid_phi} simplifies into 
\begin{equation} \label{fluid_phi_simp}
 \partial_t \mathcal{J} + \lambda \nabla \varrho +2 \lambda y \varrho + \mu \mathcal{J}= 0, 
 \end{equation}
then differentiating equation \eqref{continuity_phi} in time,
\begin{equation} \label{Div_continuity_phi}
\Div \partial_t \mathcal{J} = - \partial_t ( \tau^2 \partial_t\varrho),   
\end{equation}
and plugging it into \eqref{fluid_phi_simp}, we get
\begin{equation*} 
 - \partial_t ( \tau^2 \partial_t\varrho) - \mu \tau^2 \partial_t \varrho + \lambda \Delta \varrho +2 \lambda \Div(y \varrho )= 0. 
 \end{equation*}
 As 
 \[ \partial_t ( \tau^2 \partial_t\varrho) = \tau^2 \partial_t^2 \varrho + 2\tau \dot{\tau} \partial_t \varrho, \]
 and recalling that $ \tau \sim 2 \sqrt{\lambda t/\mu} $ and $ \dot{\tau}  \sim 2 \sqrt{\lambda /(\mu t)}$, we get that $\tau \dot{\tau} \rightarrow 2$ and $\tau^2 \ll  (\tau^2)^2$ as $t \rightarrow + \infty$, so we can neglect the $\partial_t^2 \varrho$ term. Introducing another scaling in time $s$ defined by
 \begin{equation} \label{scaling}
 \partial_s = \frac{\mu}{\lambda} \tau^2 \partial_t,
 \end{equation}
 and discarding the lower-order terms, we finally find that
 \[ \partial_s \varrho = L \varrho,   \]
 where $L:=\lambda \Delta  +2 \lambda \Div(y . )$ denotes a Fokker-Planck operator, for which it is well known (under some hypothesis that we will recall in the following) that in large times the solution converges strongly to a Gaussian. Note that unlike the elliptic equation \eqref{elliptic} of the focusing case, the Fokker-Planck operator $L$ is not invariant by translations, hence the limit will be unique (and not only unique up to translations).

\begin{remark}
Note that the definition  \eqref{scaling} of $s$ can be rewritten
\[ s = \frac{\mu}{\lambda} \int \frac{1}{\tau^2}, \]
that gives $s \sim \log(t)/4$. This scaling is different from the one for the logarithmic Schrödinger equation described in \cite{carles2018}(which was of order $s \sim \log \log(t)/4$). However, we will see that the same proof stands in order to get the long-time dynamics of our solutions.
\end{remark}

In order to perform the limit in a rigorous way, we will first look at the energy quantities of our system \eqref{continuity_phi}-\eqref{fluid_phi}. The scaled form of the energy inequality \eqref{eq_energy} referring to system \eqref{continuity_phi}-\eqref{fluid_phi} is the following:
\begin{equation} \mathcal{E}(t)  +  \mu \int_0^t \mathcal{D}(s) ds \leq \mathcal{E}_0, \label{scaled_eq_energy} \end{equation}
where
\begin{gather}
   \mathcal{E}(t) =\frac{1}{2 \tau(t)^2} \mathcal{E}_{\kin}(t) + \lambda \mathcal{E}_{\ent}(t), \label{scaled_energy} \\
    \mathcal{E}_{\kin}(t)=  \frac{1}{2} \int_{\R^d} \left(  \varrho |u|^2 +  |\nabla \sqrt{\varrho} |^2  \right) dy, \label{scaled_energy_kin} \\
    \mathcal{E}_{\ent}(t)=\int_{\R^d}  \varrho\log \left( \frac{\varrho}{\Gamma}  \right) dx \label{scaled_energy_ent} \\
    \mathcal{D}(s)= \int_{\R^d} \varrho |u|^2 dx, \label{scaled_dissipation}
\end{gather}
$\mathcal{E}_0:=E_0$ and $\Gamma := e^{-|x|^2}=\gamma^2$. Now we recall that from \eqref{scaled_initial_datum} and \eqref{continuity_phi}, for all $t \geq 0$,
\begin{equation*} \label{scaled_mass_conservation}
    \| \varrho(t) \|_{L^1(\R^d)} = \| \varrho_0 \|_{L^1(\R^d)} =\| \Gamma \|_{L^1(\R^d)}. 
\end{equation*}
As $\varrho,\Gamma \geq 0$, we can apply the Csiszár-Kullback inequality (see e.g. \cite{ineg_sobo_log}) that gives
\[ \mathcal{E}_{\ent}(t) \geq \frac{1}{2 \| \Gamma \|_{L^1}} \| \varrho(t)-\Gamma \|^2_{L^1} \geq 0.   \]
We now state the first lemma of this section, which will be useful in the following to get some convergence estimates:
\begin{lemma} \label{lemma_entropy}
There holds
\begin{equation} \label{equation_lemma_1}
    \sup_{t\geq 0} \left( \int_{\R^d}  \varrho(t,y)(1 +|y|^2 +|\log \varrho(t,y) |)  dy + \frac{1}{ \tau(t)^2} \mathcal{E}_{\kin}(t) \right) < \infty
\end{equation}
and
\begin{equation} \label{equation_lemma_2}
    \int_0^{\infty} \frac{\dot{\tau}(t)}{\tau^3(t)} \mathcal{E}_{\kin}(t) dt  < \infty.
\end{equation}
\end{lemma}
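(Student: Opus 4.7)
My approach for \eqref{equation_lemma_1} splits into four contributions. Mass conservation $\int \varrho(t,y)\,dy = \|\Gamma\|_{L^1(\R^d)}$ follows immediately from \eqref{continuity_phi}. The Csiszár-Kullback inequality (noted just above) gives $\mathcal{E}_{\ent}(t) \geq 0$, so dropping either summand of $\mathcal{E}$ in \eqref{scaled_eq_energy} yields $\tfrac{1}{\tau(t)^2}\mathcal{E}_{\kin}(t) \leq 2\mathcal{E}_0$ and $\mathcal{E}_{\ent}(t) \leq \mathcal{E}_0/\lambda$. For $\int |y|^2 \varrho(t,y)\,dy$, I would translate Assumption \ref{assumption_reg_defocusing} (which gives $\langle x\rangle\sqrt{\rho(t,\cdot)}\in L^2(\R^d)$ uniformly in $t$) to the scaled variable $y = x/\tau$: a direct change of variables produces a factor $\tau(t)^{-2}$ in front of $\int|x|^2\rho(t,x)\,dx$, and the ODE $\ddot\tau = 2\lambda/\tau - \mu\dot\tau$ with $\tau(0)=1$, $\dot\tau(0)=0$, $\ddot\tau(0)=2\lambda>0$ forces $\dot\tau \geq 0$ (by the same argument as in Lemma \ref{dotrlemma}), so $\tau(t)\geq 1$ and the bound is uniform in $t$.

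For the last piece $\int \varrho|\log\varrho|\,dy$, I would decompose $|\log\varrho| \leq |\log(\varrho/\Gamma)| + |y|^2$, so that the $|y|^2$-contribution is controlled by the previous step. For the relative-entropy contribution, I would split the integration by $\{\varrho\geq\Gamma\}$ and $\{\varrho<\Gamma\}$: on the first set $\varrho\log(\varrho/\Gamma)\geq 0$ and its integral is bounded by $\mathcal{E}_{\ent}(t)\leq\mathcal{E}_0/\lambda$; on the second, the elementary pointwise inequality $\varrho\log(\Gamma/\varrho) \leq \Gamma/e$ (maximum of $x\mapsto x\log(a/x)$ at $x=a/e$) gives an $L^1$-integrable majorant since $\Gamma\in L^1(\R^d)$.

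For \eqref{equation_lemma_2}, my plan is to refine \eqref{scaled_eq_energy} into a sharper dissipation identity that exhibits $\dot\tau/\tau^3 \cdot \mathcal{E}_{\kin}$ as a nonnegative dissipation term. Formally, I would test the momentum equation \eqref{fluid_phi} against $u = \mathcal{J}/\varrho$ and integrate over $\R^d$. The continuity equation \eqref{continuity_phi} converts the pressure and moment integrals into time derivatives via
\[
\int u \cdot \nabla \varrho \,dy = \tau^2\,\partial_t\!\int \varrho \log \varrho\,dy, \qquad 2 \int y \cdot \varrho u\, dy = \tau^2\,\partial_t\!\int |y|^2 \varrho\,dy,
\]
while the usual Madelung identity handles the Bohm term, which integrates against $u$ to produce $-\partial_t E_Q$ with $E_Q := \tfrac12\int |\nabla\sqrt\varrho|^2\,dy$. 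Gathering these contributions I would arrive at
\[
\partial_t \mathcal{E}_{\kin}(t) + \lambda \tau(t)^2\, \partial_t \mathcal{E}_{\ent}(t) + \mu \mathcal{D}(t) = 0.
\]
Dividing by $\tau^2$ and regrouping $\partial_t\mathcal{E}_{\kin}/\tau^2 = \partial_t(\mathcal{E}_{\kin}/\tau^2) + 2\dot\tau\mathcal{E}_{\kin}/\tau^3$ leads to
\[
\partial_t\!\left(\frac{\mathcal{E}_{\kin}(t)}{\tau(t)^2} + \lambda \mathcal{E}_{\ent}(t)\right) + \frac{2\dot\tau(t)}{\tau(t)^3}\mathcal{E}_{\kin}(t) + \frac{\mu}{\tau(t)^2}\mathcal{D}(t) = 0.
\]
Since the bracketed quantity is nonnegative (Csiszár-Kullback plus positivity of $\mathcal{E}_{\kin}$), $\dot\tau \geq 0$, and $\mathcal{D} \geq 0$, integrating over $[0,t]$ and dropping the positive terms on the right yields $\int_0^t \frac{\dot\tau}{\tau^3}\mathcal{E}_{\kin}\,ds \leq \tfrac12(\mathcal{E}_{\kin}(0) + \lambda \mathcal{E}_{\ent}(0))$, uniformly in $t$, which is \eqref{equation_lemma_2}.

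The hard part will be making the formal energy identity of the last paragraph rigorous at the level of energy weak solutions in the sense of Definition \ref{weak}: the multiplication by $u$, the Madelung manipulation of the Bohm term, and the various integrations by parts used to produce the entropy- and moment-derivative identities all require regularity beyond the bare $L^\infty(\R;H^1)$ in Definition \ref{weak}. I expect Assumption \ref{assumption_reg_defocusing} ($\sqrt{\rho}\in L^\infty(\R;H^1\cap\mathcal{F}(H^1)(\R^d))$) to supply exactly what is needed: the $H^1$-part to make $\mathcal{E}_{\kin}$ finite and legitimize the quantum pressure computation, and the $\mathcal{F}(H^1)$-part to ensure finiteness of second moments and the validity of $2\int y \cdot \varrho u \,dy= \tau^2 \partial_t\!\int |y|^2 \varrho\,dy$ used to identify the pressure contribution with $\lambda\tau^2\partial_t\mathcal{E}_{\ent}$.
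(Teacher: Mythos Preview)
Your argument is correct and reaches the same conclusions, but the route for \eqref{equation_lemma_1} is genuinely different from the paper's. The paper does \emph{not} invoke Assumption~\ref{assumption_reg_defocusing} to bound $\int |y|^2\varrho$; instead it runs a self-contained bootstrap: writing $\mathcal{E}_+ := \tfrac{1}{2\tau^2}\mathcal{E}_{\kin} + \lambda\int_{\varrho>1}\varrho\log\varrho + \lambda\int|y|^2\varrho$, the energy inequality gives $\mathcal{E}_+(t)\le \mathcal{E}_0 + \lambda\int_{\varrho<1}\varrho\log(1/\varrho)$, and the latter is controlled via $\int\varrho^{1-\eps}\lesssim\|\varrho\|_{L^1}^{1-\eps(d+2)/2}\||y|^2\varrho\|_{L^1}^{d\eps/2}$, yielding $\mathcal{E}_+\lesssim 1+\mathcal{E}_+^{d\eps/2}$ and hence boundedness. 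Your approach (using $\mathcal{F}(H^1)$ directly for the moment, then the pointwise bound $x\log(a/x)\le a/e$ for the relative entropy) is more elementary and avoids the interpolation, at the cost of leaning on the extra regularity hypothesis. One small correction: you write that $\int_{\{\varrho\ge\Gamma\}}\varrho\log(\varrho/\Gamma)$ is bounded by $\mathcal{E}_{\ent}$, but $\mathcal{E}_{\ent}$ contains the \emph{negative} contribution from $\{\varrho<\Gamma\}$, so the correct bound is $\mathcal{E}_{\ent}+\int_{\{\varrho<\Gamma\}}\varrho\log(\Gamma/\varrho)\le \mathcal{E}_0/\lambda + \|\Gamma\|_{L^1}/e$; this is harmless since you already control that second piece.

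For \eqref{equation_lemma_2} your strategy coincides with the paper's: both integrate the refined dissipation identity in which $\dot\tau\,\tau^{-3}\mathcal{E}_{\kin}$ appears as an extra nonnegative sink alongside $\mu\mathcal{D}$, and then use $\mathcal{E}(t)\ge 0$ together with $\dot\tau\ge 0$. The paper states this tersely as ``differentiating $\mathcal{E}$'' while you spell out the derivation by testing \eqref{fluid_phi} against $u$; the content is the same.
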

\begin{proof}
Denoting $\mathcal{E}_+$ the positive part of the scaled energy $\mathcal{E}$, we have
\[ \mathcal{E}_+ = \frac{1}{2 \tau^2} \mathcal{E}_{\kin} + \lambda \int_{\varrho > 1} \varrho \log \varrho + \lambda \int_{\R^d} |y|^2 \varrho,  \]
so equation \eqref{scaled_eq_energy} gives
\[ \mathcal{E}_+(t) + \mu \int_0^t \mathcal{D}(s) ds \leq  \mathcal{E}_0 + \lambda \int_{\varrho < 1} \varrho \log\left(\frac{1}{\varrho} \right) . \]
The last term is controlled by
\[  \int_{\varrho < 1} \varrho \log\left(\frac{1}{\varrho} \right) \lesssim \int_{\R^d} \varrho^{1-\eps}  \]
for all $\eps >0$ arbitrary small. By an interpolation formula, we get that
\[ \int_{\R^d} \varrho^{1-\eps}  \lesssim \| \varrho \|_{L^1}^{1-\eps\frac{d +2}{2} }  \| |y|^2 \varrho \|_{L^1}^{ \frac{d \eps}{2} } \]
for all $0< \eps < 2/(d+2)$. This implies that for all $t \geq 0$, 
\[ \mathcal{E}_+(t) + \mu \int_0^t \mathcal{D}(s) ds \lesssim  1 + \mathcal{E}_+^{\frac{d \eps}{2}}(t),  \]
thus $\mathcal{E}_+(t) \in L^{\infty}(\R_+)$, and  equation \eqref{equation_lemma_1} follows. Differentiating $\mathcal{E}$, we get
\[ \dot{\mathcal{E}}= -2 \frac{\dot{\tau}}{\tau} \mathcal{E}_{\kin},  \]
and since $\mathcal{E}(t) \geq 0$, \eqref{equation_lemma_2} follows from \eqref{equation_lemma_1}.
\end{proof}

\begin{proposition}
Let $(\varrho,\mathcal{J})$ be a smooth solution of the scaled hydrodynamical system \eqref{continuity_phi}-\eqref{fluid_phi}. Then 
\[ \varrho(t) \underset{t \to \infty}{\rightharpoonup} \Gamma  \text{ weakly in } L^1(\R^d). \]
\end{proposition}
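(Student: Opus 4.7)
The plan is to mimic the proof of Theorem \ref{theorem1}: first extract weak $L^1$ limits along subsequences using the uniform bounds of Lemma \ref{lemma_entropy}, then identify those limits by passing to the limit in the hydrodynamical system \eqref{continuity_phi}--\eqref{fluid_phi}, following the heuristic already outlined after Lemma~\ref{lemma_entropy}.

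For compactness, Lemma \ref{lemma_entropy} supplies the uniform-in-$t$ bound on $\int \varrho(1+|y|^2+|\log \varrho|)$. The $\int \varrho|\log\varrho|$ bound yields equi-integrability of $\{\varrho(t)\}_{t\geq 0}$ via de la Vallée-Poussin, while the $\int |y|^2\varrho$ bound yields tightness. By Dunford-Pettis, given any sequence $t_n\to+\infty$, a subsequence of $\varrho(t_n)$ converges weakly in $L^1(\R^d)$ to some nonnegative $\varrho_\infty$ and, by tightness, $\|\varrho_\infty\|_{L^1}=\|\Gamma\|_{L^1}=\pi^{d/2}$.

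To identify $\varrho_\infty$, I would set $\varrho_n(t,y):=\varrho(t+t_n,y)$ and $\mathcal{J}_n(t,y):=\mathcal{J}(t+t_n,y)$ for $t\in(0,1)$ and test the weak form of \eqref{fluid_phi} against $\zeta\in\mathcal{C}_0^\infty((0,1)\times\R^d;\R^d)$. As $n\to\infty$, $\tau(t+t_n)\to+\infty$ uniformly on $[0,1]$, so all terms carrying a $\tau^{-2}$ prefactor, namely the Bohm potential $\frac{1}{2\tau^2}\varrho\nabla(\Delta\sqrt{\varrho}/\sqrt{\varrho})$ and the convective tensor $\frac{1}{\tau^2}\Div(\mathcal{J}\otimes\mathcal{J}/\varrho)$, vanish in the limit, the boundedness of $\mathcal{E}_{\kin}/\tau^2$ from Lemma \ref{lemma_entropy} being exactly what dominates the integrands. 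Moreover Cauchy-Schwarz and the dissipation estimate give
\[ \int_0^1\|\mathcal{J}_n(t)\|_{L^1}^2\,dt \leq \|\varrho_0\|_{L^1}\int_{t_n}^{t_n+1}\mathcal{D}(s)\,ds \underset{n\to+\infty}{\longrightarrow}0, \]
which kills the $\partial_t\mathcal{J}_n$ and $\mu\mathcal{J}_n$ contributions as well. What survives is the distributional identity $\lambda\nabla\varrho_\infty+2\lambda y\varrho_\infty=0$ on $\R^d$.

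A standard mollification argument then shows that any nonnegative $L^1$ distributional solution of $\nabla\varrho_\infty+2y\varrho_\infty=0$ is of the form $\varrho_\infty=c_0 e^{-|y|^2}$, and the mass constraint forces $c_0=1$, yielding $\varrho_\infty=\Gamma$. Uniqueness of the subsequential limit then upgrades to full weak $L^1$ convergence $\varrho(t)\rightharpoonup\Gamma$. The main obstacle I expect is the rigorous control of the $\tau^{-2}$-weighted nonlinear terms in the passage to the limit: the smallness of $1/\tau^2$ must beat the possibly $\tau$-growing $L^2$-norms of $\nabla\sqrt{\varrho_n}$ and $\Lambda_n$, and this is precisely where Lemma \ref{lemma_entropy} plays its decisive role through the bound $\mathcal{E}_{\kin}/\tau^2\leq C$. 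A second subtlety is the final elliptic identification, for which one must carefully exploit that $\varrho_\infty\in L^1\cap L^1(|y|^2\,dy)$ so that $\varrho_\infty e^{|y|^2}$ makes distributional sense after mollification.
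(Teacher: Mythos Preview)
Your overall strategy---pass to the limit directly in the momentum equation \eqref{fluid_phi} and read off the first-order relation $\nabla\varrho_\infty+2y\varrho_\infty=0$---is a genuinely different and potentially cleaner route than the paper's, which instead eliminates $\mathcal{J}$, rescales time, derives a Fokker--Planck limit $\partial_s\tilde\varrho_\infty=L\tilde\varrho_\infty$, and then invokes \cite{arnold2000}. Your endpoint (a first-order ODE whose only $L^1$ solution of the right mass is $\Gamma$) would bypass the Fokker--Planck convergence theory entirely.

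However, there is a real gap in the handling of the Bohm term. You claim the $\tau^{-2}$-weighted quantities vanish because $\tau^{-2}\to 0$ and $\mathcal{E}_{\kin}/\tau^2$ is bounded, but these two facts together only give \emph{boundedness} of $\tau^{-2}\|\nabla\sqrt{\varrho_n}\|_{L^2}^2$, not decay: nothing in Lemma~\ref{lemma_entropy} prevents $\mathcal{E}_{\kin}$ from growing like $\tau^2$. Over a unit interval in $t$, the integrated bound \eqref{equation_lemma_2} reads $\int \dot\tau\,\tau^{-3}\mathcal{E}_{\kin}\,dt<\infty$, which (since $\dot\tau\tau^{-3}\sim t^{-2}$) controls $\int_{t_n}^{t_n+1} t^{-2}\mathcal{E}_{\kin}\,dt$, not the $\int_{t_n}^{t_n+1} t^{-1}\mathcal{E}_{\kin}\,dt$ you need. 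This is precisely why the paper introduces the logarithmic time $s$ with $dt=\tfrac{\lambda}{\mu}\tau^2\,ds$: after this change of variable, \eqref{equation_lemma_2} becomes (up to constants) $\int_0^\infty \tau^{-2}(s)\,\mathcal{E}_{\kin}(s)\,ds<\infty$, so the Bohm contribution on any shifted unit $s$-interval is the tail of a convergent integral and does vanish. The convective term, by contrast, is fine in your setup since $\int_{t_n}^{t_n+1}\mathcal D\,dt\to 0$ already.

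A second, smaller gap: you extract $\varrho(t_n)\rightharpoonup\varrho_\infty$ at single times but then test the equation over $(0,1)\times\R^d$, which produces a relation for the \emph{space-time} weak limit $\bar\varrho(t,y)$, not for $\varrho_\infty$. You still need the continuity equation \eqref{continuity_phi} (whose right-hand side $\tau^{-2}\Div\mathcal{J}_n\to 0$ by your own estimate) to conclude $\partial_t\bar\varrho=0$ and hence $\bar\varrho=\varrho_\infty$; the paper does exactly this step in the $s$ variable via \eqref{continuity_phi_s}.
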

\textit{Proof of Theorem \ref{theorem1}.}
By the elimination of $\mathcal{J}$ described above, using equation \eqref{Div_continuity_phi}, we get 
\begin{equation*} 
 - \partial_t ( \tau^2 \partial_t\varrho) - \mu \tau^2 \partial_t \varrho + \lambda L \varrho = \frac{1}{2 \tau^2} \varrho \nabla \left( \frac{\Delta \sqrt{\varrho}}{\sqrt{\varrho}} \right)-\frac{1}{\tau^2} \Div \left( \frac{\mathcal{J} \otimes \mathcal{J}}{\varrho} \right) . 
 \end{equation*}
 Using the change of variables \eqref{scaling} and introducing the notation
 \[ \tilde{\varrho}(s(t),y):= \varrho(t,y), \]
 we calculate the quantities
 \[ \partial_t ( \tau^2 \partial_t\varrho) = \frac{\lambda^2}{\mu^2}  \frac{1}{\tau^2} \partial_s^2 \tilde{\varrho} \ \text{and} \  \mu \tau^2 \partial_t \varrho = \lambda \partial_s \tilde{\varrho},  \]
hence we obtain the following equation:
 \begin{equation} \label{equation_s}
    -  \frac{\lambda^2}{\mu^2}  \frac{1}{\tau^2} \partial_s^2 \tilde{\varrho} -\lambda \partial_s \tilde{\varrho} + \lambda L \tilde{\varrho} = \frac{1}{2 \tau^2} \tilde{\varrho} \nabla \left( \frac{\Delta \sqrt{\tilde{\varrho}}}{\sqrt{\tilde{\varrho}}} \right)-\frac{1}{\tau^2} \Div \left( \frac{\mathcal{J} \otimes \mathcal{J}}{\tilde{\varrho}} \right) . 
 \end{equation}
Now we remark that equation  \eqref{equation_lemma_1} induces
\begin{equation} \label{scaled_equation_lemma_1}
    \sup_{s\geq 0}  \int_{\R^d} \tilde{\varrho}(s,y) (1 +|y|^2 +|\log \tilde{\varrho}(s,y) |)  dy   < \infty, 
\end{equation} 
\eqref{equation_lemma_2} gives
\begin{equation} \label{scaled_equation_lemma_2}
\int_0^{\infty}\dot{\tau}(s) \mathcal{E}_{\kin}(s) dt  < \infty,  
\end{equation}
and that 
\[   \tau(s) \sim 2 \sqrt{\frac{\lambda}{\mu}} e^{2s} \ \text{and} \ \dot{\tau}(s)\sim \sqrt{\frac{\lambda}{\mu}} e^{-2s}, \]
so we can conclude like in \cite{carles2018}. Let a sequence $s_n \rightarrow \infty$, take $s\in \left[-1,2 \right]$, and denote
\[ \tilde{\varrho}_n(s,y):=\tilde{\varrho}(s+s_n,y).\]
From \eqref{scaled_equation_lemma_1} along with the de la Vallée-Poussin and Dunford-Pettis theorems, we get the following weak convergence (up to a subsequence, not relabeled for reader's convenience), for all $p \in \left[1,\infty \right)$,
\[ \tilde{\varrho}_n  \underset{t \to \infty}{\rightharpoonup} \tilde{\varrho}_{\infty} \ \ \ \text{in} \ L^p(-1,2;L^1(\R^d)). \]
We also get the weak convergence of the initial datum, up to another subsequence:
\[ \tilde{\varrho}_n(0)  \underset{t \to \infty}{\rightharpoonup} \tilde{\varrho}_{0,\infty} \ \ \ \text{in} \ L^1(\R^d). \]
Thanks to \eqref{scaled_equation_lemma_1}, we also get that the family $( \tilde{\varrho}(s_n,.))_n$ is tight, so
\[ \int_{\R^d} \tilde{\varrho}_{0,\infty}(y) dy = \int_{\R^d} \Gamma(y) dy \]
and
\[ \int_{\R^d} \tilde{\varrho}_{0,\infty}(y)(1 +|y|^2 +|\log  \tilde{\varrho}_{0,\infty}(y) |)   dy   < \infty. \]
Then, denoting $\tau_n(s)=\tau(s+s_n)$, equation \eqref{scaled_equation_lemma_2} implies that
\[ \frac{1}{2 \tau_n^2} \tilde{\varrho}_n \nabla \left( \frac{\Delta \sqrt{\tilde{\varrho}_n}}{\sqrt{\tilde{\varrho}_n}} \right)-\frac{1}{\tau_n^2} \Div \left( \frac{\mathcal{J}_n \otimes \mathcal{J}_n}{\tilde{\varrho}_n} \right)  \underset{t \to \infty}{\rightharpoonup} 0 \ \ \ \text{in} \ L^1(-1,2;W^{-2,1}(\R^d)). \]
In addition, in \eqref{equation_s}, all the other terms but two obviously go weakly to zero, which yields
\begin{equation} \label{eq_infty}
    \partial_s \tilde{\varrho}_{\infty} = L \tilde{\varrho}_{\infty} 
\end{equation}
in $\mathcal{D}'((-1,2) \times \R^d)$, with $\tilde{\varrho}_{\infty}(0,;)= \tilde{\varrho}_{0,\infty} \in L^1(\R^d)$. Thanks to the above bounds on $\tilde{\varrho}_{0,\infty}$, it is known (see \cite{arnold2000}) that the  solution $\tilde{\varrho}_{\infty}$ to \eqref{eq_infty} is actually defined for all $s \geq0$ and satisfies 
\begin{equation} \label{conv_gaussian_infty}
    \| \tilde{\varrho}_{\infty} - \Gamma \|_{L^1(\R^d)} \underset{t\to \infty}{\longrightarrow} 0.
\end{equation}
Going back to system \eqref{continuity_phi}-\eqref{fluid_phi}, we need to show that $\tilde{\varrho}_{\infty}$ is independent of $s$. In the $s$ variable, system \eqref{continuity_phi} becomes
\begin{equation} \label{continuity_phi_s} 
\partial_s \tilde{\varrho} + \frac{\mu}{\lambda}\Div \tilde{\mathcal{J}} =0,  
\end{equation}
and \eqref{scaled_equation_lemma_2} implies that $\tilde{\mathcal{J}} \in L^2(-1,2;L^1(\R^d))$. With $\tilde{\mathcal{J}}_n(s):= \tilde{\mathcal{J}}(s=s_n)$, we have
\[ \Div \tilde{\mathcal{J}}_n \underset{n\to \infty}{\longrightarrow} 0 \ \ \ \text{in} \ L^2(-1,2;W^{-1,1}(\R^d)), \]
so
\[ \partial_s \tilde{\varrho}_{\infty}=0. \]
Combining this last equality with equation \eqref{conv_gaussian_infty}, we infer that $\tilde{\varrho}_{\infty}=\Gamma$. The limit being unique, no extraction of a subsequence is needed, and we conclude that 
\[ \tilde{\varrho}(s) \underset{s \to \infty}{\rightharpoonup} \Gamma  \text{ weakly in } L^1(\R^d). \]

\section{Numerical simulations}
In this section we will plot some numerical trajectories of equation \eqref{NMeq}. As performed in \cite{carles_bao_DF} and \cite{carles_bao_splitting} in the logarithmic case $\mu=0$, in order to avoid numerical blow-up and round-off error due to the first logarithmic component of equation \eqref{NMeq}, we are going to discretize the following regularized equation, with a small parameter $\eps >0$:
\[ \left\{  \begin{array}{l} \label{NMeq_reg}
i  \partial_t \psi^{\eps} + \frac{1}{2}  \Delta \psi^{\eps} = \lambda \psi^{\eps} \log(|\psi^{\eps}|^2+\eps) + \frac{1}{2i}  \mu  \psi^{\eps} \log\left( \frac{\psi^{\eps}}{(\psi^{\eps})^*} \right), \ \ \ x \in \Omega, \ \ \ t>0,  \\
\psi^{\eps}(0,x)=\psi_0(x),  \end{array} \right.  \]
where $\Omega = \R^d$ or $\Omega \subset \R^d$ is a bounded domain with homogeneous Dirichlet boundary condition or periodic boundary condition posted on the boundary. We remark that mass conservation still holds, in the sense that
\[ \forall t \geq 0, \ \ \ \| \psi^{\eps}(t,.) \|_{L^2(\R^d)}=\| \psi_0 \|_{L^2(\R^d)}, \]
and the new regularized energy is defined as follows:
\begin{multline*}
 \forall t \geq 0, \ \ \ E^{\eps}(t)= \int_{\Omega} \left( |\nabla \psi^{\eps}(t,x) |^2 + 2 \lambda \eps |\psi^{\eps}(t,x)| + \lambda |\psi^{\eps}(t,x)|^2 \log( |\psi^{\eps}(t,x)|^2 + \eps) \right. \\
 \left. - \lambda \eps^2 \log(1 + |\psi^{\eps}(t,x)|/\eps)^2 \right) dx.
 \end{multline*}
 We will perform a semi-discretization in time with a Lie-Trotter splitting method. The operator splitting methods for the time integration of \eqref{NMeq_reg} are based on the following splitting 
 \[  \partial_t \psi^{\eps} = A( \psi^{\eps} ) + B ( \psi^{\eps} ) + C(\psi^{\eps} ),   \]
 where
 \[ A( \psi^{\eps}) = \frac{1}{2} i \Delta \psi^{\eps}, \ \ \ B ( \psi^{\eps} ) = -i \psi^{\eps} \log( |\psi^{\eps}|^2 + \eps) , \ \ \ C(\psi^{\eps} )= -\frac{1}{2}  \mu  \psi^{\eps} \log\left( \frac{\psi^{\eps}}{(\psi^{\eps})^*} \right), \]
and the solutions of the subproblems
\[ i  \partial_t u(t,x)=- \frac{1}{2}  \Delta u(t,x), \ \ \ u(0,x)=u_0(x), \ \ \ x \in \Omega, \ \ \ t>0,      \]
\[ i  \partial_t v(t,x)= \lambda v(t,x) \log( v(t,x)+\eps) , \ \ \ v(0,x)=v_0(x), \ \ \ x \in \Omega, \ \ \ t>0,      \]
\[ i  \partial_t w(t,x)= \frac{ \mu}{2i}   w(t,x) \log\left( \frac{w(t,x)}{w^*(t,x)} \right), \ \ \ w(0,x)=w_0(x), \ \ \ x \in \Omega, \ \ \ t>0.      \]
The associated operators are explicitly given, for $t \geq 0$, by
\[  u(t,.)=\Phi^t_A(u_0)=e^{it \Delta} u_0, \]
\[  v(t,.)=\Phi^t_B(v_0)= v_0 e^{-it \log( |v_0|^2+\eps)} , \]
\[  w(t,.)=\Phi^t_C(w_0)=a_0 e^{i \theta_0 e^{- \mu t}}, \ w_0=a_0 e^{i \theta_0}. \]
All the numerical simulations will be made in dimension $d=1$ on the interval $\Omega = \left[a ,b \right]$. The computation of $\Phi_A^t$ is made by a Fast Fourier Transformation. Let $N$ be a positive even integer and denote $\Delta x=(b-a)/N$ and the grid points $x_j=a+k \Delta x $ for $0 \leq k \leq N-1$. Denote by $\psi^{N,j}$ the discretized solution vector over the grid $(x_k)_{0 \leq k \leq N-1}$ at time $t=t_j= j \Delta t$. Let $\mathcal{F}_N$ and $\mathcal{F}_N^{-1}$ denote the discrete Fourier transform and its inverse, respectively. With this notation, $\Phi^t_A(\psi^{N,j})$ can be obtained by 
\[ \Phi^t_A(\psi^{N,j}) = \mathcal{F}_N^{-1} \left( e^{-i \Delta t ( \mu^N)^2} \mathcal{F}_N( \psi^{N,j} )    \right),  \]
where
\[  \mu^N = \frac{2 \pi}{b-a} \left[ 0,1, \ldots, \left( \frac{N}{2} -1 \right), -\frac{N}{2}, \ldots,-1\right],  \]
and the multiplication of two vectors is taken as point-wise. \\

We perform our simulations with a space step $\Delta x=0.2$ on the interval $\Omega =\left[-100,100\right]$. We take the saturation constant $\eps=10^{-3}$, and the dissipation constant $\mu=1$, using a time step $\Delta t =10^{-3}$ on the interval $\left[0,T_{\max} \right]$ with $T_{\max}=1000$. We perform two simulations with $\lambda$ either equal to 0.1 or -0.1, with the initial function
\[  \psi_0(x)=|\sin x| e^{-0.1(x-3)^2}+ |\cos x|e^{-0.2(x+4)^2}. \]
In the focusing case, we clearly observe the convergence to a Gaussian function of the same mass (Figure \ref{fig:lambda_neg}). In the defocusing case, our solution vanishes with a Gaussian profile (Figure \ref{fig:lambda_pos}). \\ 

In order to corroborate the tightness hypothesis Assumption \ref{assumption_tightness} made in Section 2 in the focusing case $\lambda <0$, we are also going to test the worst case scenario of two symmetric Gaussian function diverging at both infinities. Here we take the initial function
\[  \varphi_0(x)=e^{-(x-10)^2+100 i x} + e^{-(x+10)^2-100 i x}, \]
so our two Gaussian functions have huge initial velocities trying to make them go to infinity. This is indeed the case for the linear Schrödinger equation (see Figure \ref{fig:2_gaussian_free}). We observe that the dissipation (taking $\lambda =-0.1$ and $\mu=10$ in order to enhance the nonlinear effect) seems to prevent this behavior from happening by quickly freezing their diverging dynamics (cf Figure \ref{fig:2_gaussian_mu}). Here we do not go for large times, as we take $T_{\max}=10$ with $\Delta t=10^{-4}$, in order to avoid edge effect from our moving Gaussian functions in the free case. Other constants are taken as above.

\bibliographystyle{plain}
\bibliography{biblio}

\begin{thebibliography}{10}

\bibitem{ineg_sobo_log}
C{\'e}cile An{\'e}, S{\'e}bastien Blach{\`e}re, Djalil Chafai, Pierre
  Foug{\`e}res, Ivan Gentil, Florent Malrieu, Cyril Roberto, and Gr{\'e}gory
  Scheffer.
\newblock {\em {Sur les in{\'e}galit{\'e}s de Sobolev logarithmiques}}.
\newblock {Soci{\'e}t{\'e} Math{\'e}matique de France}, 2000.

\bibitem{antonelli2009}
Paolo Antonelli and Pierangelo Marcati.
\newblock On the finite energy weak solutions to a system in quantum fluid
  dynamics.
\newblock {\em Comm. Math. Phys.}, 287(2):657--686, 2009.

\bibitem{ardila2016}
Alex~H. Ardila.
\newblock Orbital stability of {G}ausson solutions to logarithmic
  {S}chr\"{o}dinger equations.
\newblock {\em Electron. J. Differential Equations}, pages Paper No. 335, 9,
  2016.

\bibitem{arnold2000}
Anton Arnold, Peter Markowich, and Andreas Unterreiter.
\newblock On convex sobolev inequalities and the rate of convergence to
  equilibrium for fokker-planck type equations.
\newblock {\em Communications in Partial Differential Equations}, 26, 05 2000.

\bibitem{carles_bao_DF}
Weizhu Bao, R{\'e}mi Carles, Chunmei Su, and Qinglin Tang.
\newblock {Error estimates of a regularized finite difference method for the
  logarithmic Schr{\"o}dinger equation}.
\newblock {\em {SIAM Journal on Numerical Analysis}}, 57(2):657--680, 2019.

\bibitem{carles_bao_splitting}
Weizhu Bao, R{\'e}mi Carles, Chunmei Su, and Qinglin Tang.
\newblock {Regularized numerical methods for the logarithmic Schrodinger
  equation}.
\newblock {\em {Numerische Mathematik}}, 143(2):461--487, 2019.
\newblock 23 pages, 8 colored figures.

\bibitem{birula1976}
Iwo Bialynicki-Birula and Jerzy Mycielski.
\newblock Nonlinear wave mechanics.
\newblock {\em Annals of Physics}, 100(1):62 -- 93, 1976.

\bibitem{carles_rigidity}
R\'emi Carles, Kleber Carrapatoso, and Matthieu Hillairet.
\newblock Rigidity results in generalized isothermal fluids.
\newblock {\em Annales Henri Lebesgue}, 1:47--85, 2018.

\bibitem{carles2018}
R\'{e}mi Carles and Isabelle Gallagher.
\newblock Universal dynamics for the defocusing logarithmic {S}chr\"{o}dinger
  equation.
\newblock {\em Duke Math. J.}, 167(9):1761--1801, 2018.

\bibitem{cazenave1983}
Thierry Cazenave.
\newblock Stable solutions of the logarithmic schrödinger equation.
\newblock {\em Nonlinear Analysis: Theory, Methods and Applications},
  7(10):1127 -- 1140, 1983.

\bibitem{cazenave}
Thierry Cazenave.
\newblock {\em Semilinear {S}chr\"{o}dinger equations}, volume~10 of {\em
  Courant Lecture Notes in Mathematics}.
\newblock New York University, Courant Institute of Mathematical Sciences, New
  York; American Mathematical Society, Providence, RI, 2003.

\bibitem{chavanis2017}
Pierre-Henri Chavanis.
\newblock {Derivation of a generalized Schr{\"o}dinger equation from the theory
  of scale relativity}.
\newblock {\em {Eur.Phys.J.Plus}}, 132(6):286, 2017.

\bibitem{chavanis2019cosmo}
Pierre-Henri Chavanis.
\newblock Derivation of the core mass-halo mass relation of fermionic and
  bosonic dark matter halos from an effective thermodynamical model.
\newblock {\em Physical Review D}, 100(12), Dec 2019.

\bibitem{chavanis2019stat}
Pierre-Henri Chavanis.
\newblock {Generalized Euler, Smoluchowski and Schr{\"o}dinger equations
  admitting self-similar solutions with a Tsallis invariant profile}.
\newblock {\em {Eur.Phys.J.Plus}}, 134(7):353, 2019.

\bibitem{davenia2014}
Pietro d'Avenia, Eugenio Montefusco, and Marco Squassina.
\newblock On the logarithmic {S}chr\"{o}dinger equation.
\newblock {\em Commun. Contemp. Math.}, 16(2):1350032, 15, 2014.

\bibitem{dunford1938}
Nelson Dunford.
\newblock Uniformity in linear spaces.
\newblock {\em Trans. Amer. Math. Soc.}, 44(2):305--356, 1938.

\bibitem{teufel2009}
D.~D{\"u}rr and S.~Teufel.
\newblock {\em Bohmian Mechanics: The Physics and Mathematics of Quantum
  Theory}.
\newblock Springer Berlin Heidelberg, 2009.

\bibitem{ferriere2019}
Guillaume Ferriere.
\newblock {The focusing logarithmic Schr{\"o}dinger equation: analysis of
  breathers and nonlinear superposition}.
\newblock working paper or preprint, October 2019.

\bibitem{sparber2014}
A.~Figalli, C.~Klein, P.~Markowich, and C.~Sparber.
\newblock W{K}{B} analysis of {B}ohmian dynamics.
\newblock {\em Communications on Pure and Applied Mathematics}, 67(4):581--620,
  4 2014.

\bibitem{jungel2012}
Ansgar J\"{u}ngel.
\newblock Dissipative quantum fluid models.
\newblock {\em Riv. Math. Univ. Parma (N.S.)}, 3(2):217--290, 2012.

\bibitem{lasalle1968}
J.P LaSalle.
\newblock Stability theory for ordinary differential equations.
\newblock {\em Journal of Differential Equations}, 4(1):57 -- 65, 1968.

\bibitem{li_wang_2006}
Tianhong Li and Dehua Wang.
\newblock Blowup phenomena of solutions to the {E}uler equations for
  compressible fluid flow.
\newblock {\em Journal of Differential Equations}, 221(1):91 -- 101, 2006.

\bibitem{lieb_loss}
Elliott~H. Lieb and Michael Loss.
\newblock {\em Analysis}, volume~14 of {\em Graduate Studies in Mathematics}.
\newblock American Mathematical Society, Providence, RI, second edition, 2001.

\bibitem{meyer1966}
Paul-A. Meyer.
\newblock {\em Probability and potentials}.
\newblock Blaisdell Publishing Co. Ginn and Co., Waltham, Mass.-Toronto,
  Ont.-London, 1966.

\bibitem{mousavi2019}
S.~V. Mousavi and S.~Miret-Artés.
\newblock On non-linear {S}chrödinger equations for open quantum systems.
\newblock {\em The European Physical Journal Plus}, 134(9), Sep 2019.

\bibitem{nassar1985}
A~B Nassar.
\newblock Fluid formulation of a generalised {S}chr{\"o}dinger-{L}angevin
  equation.
\newblock {\em Journal of Physics A: Mathematical and General},
  18(9):L509--L511, jun 1985.

\bibitem{nassar}
Antonio Nassar and Salvador Miret-Artés.
\newblock {\em Bohmian Mechanics, Open Quantum Systems and Continuous
  Measurements}.
\newblock 01 2017.

\bibitem{wyatt2003}
Robert~E. Wyatt and Eric~R. Bittner.
\newblock Quantum mechanics with trajectories: Quantum trajectories and
  adaptive grids, 2003.

\bibitem{zander}
C.~Zander, A.~R. Plastino, and J.~D\'{\i}az-Alonso.
\newblock Wave packet dynamics for a non-linear {S}chr\"{o}dinger equation
  describing continuous position measurements.
\newblock {\em Ann. Physics}, 362:36--56, 2015.

\end{thebibliography}

\begin{figure}[p]
	\centering
		\includegraphics[width=0.85\textwidth,trim = 0cm 1cm 0cm 1cm, clip]{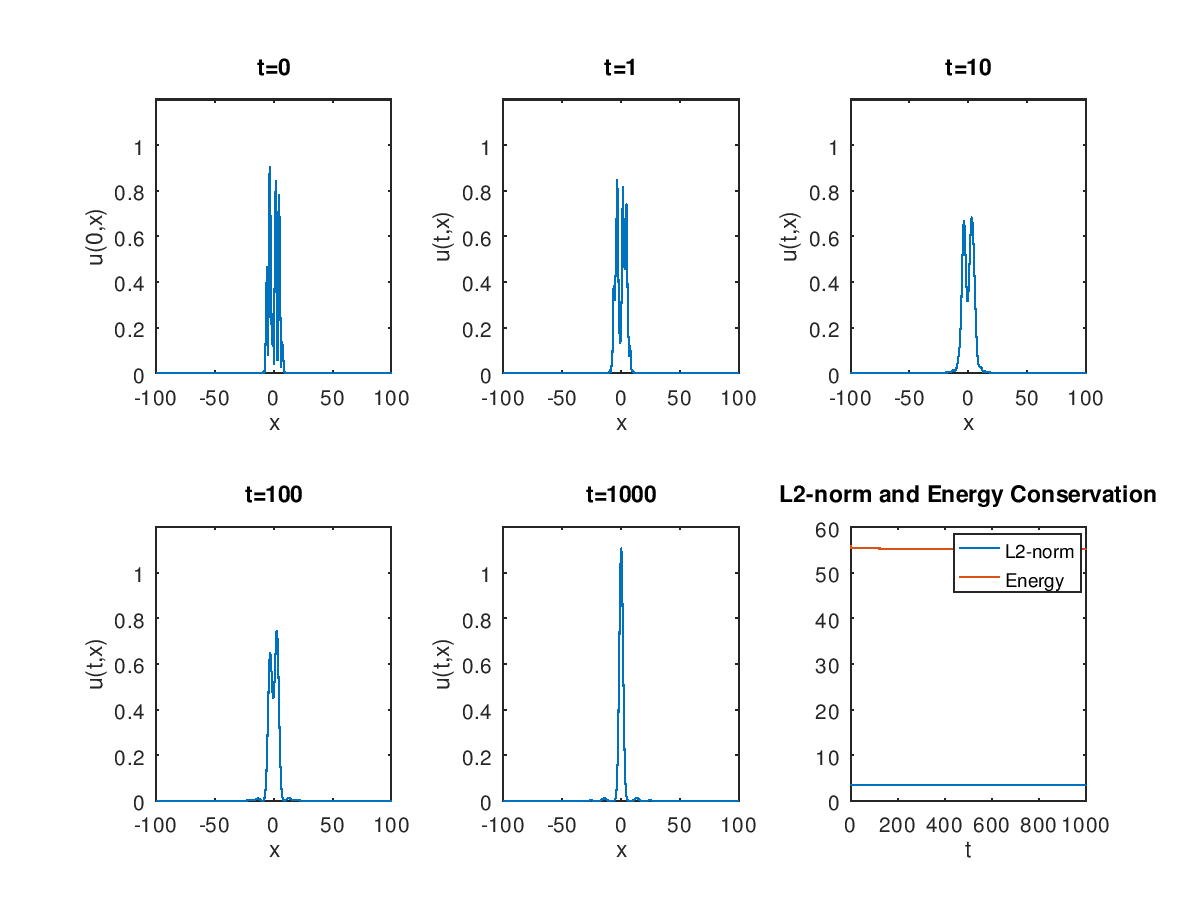}	
	\caption{Solution of equation \eqref{NMeq} with initial datum $\psi_0$ in the focusing case ($\lambda=-0.1$, $\mu=1$).}
	\label{fig:lambda_neg}
\end{figure}

\begin{figure}[p]
	\centering
		\includegraphics[width=0.85\textwidth,trim = 0cm 1cm 0cm 1cm, clip]{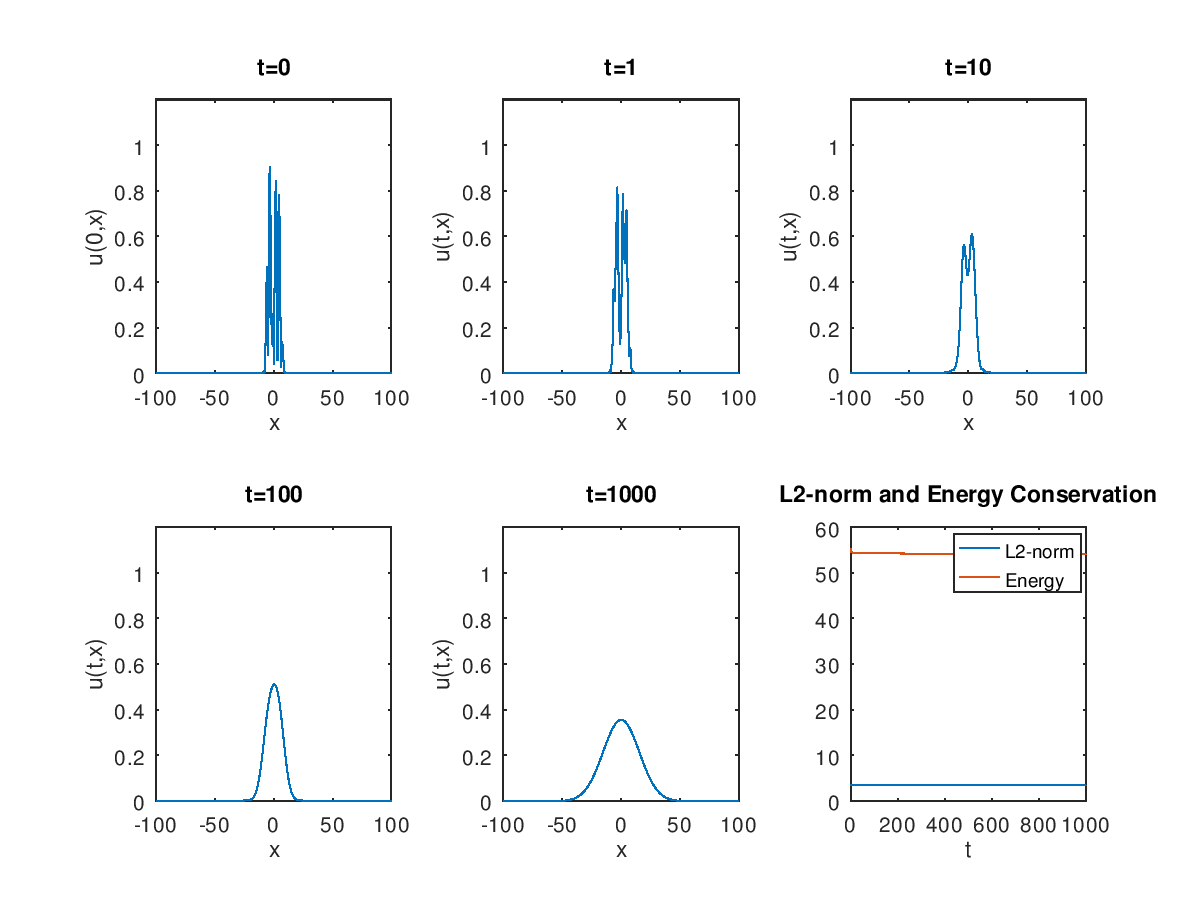}	
	\caption{Solution of equation \eqref{NMeq} with initial datum $\psi_0$ in the defocusing case ($\lambda=0.1$, $\mu=1$).}
	\label{fig:lambda_pos}
\end{figure}

\begin{figure}[p]
	\centering
		\includegraphics[width=0.85\textwidth,trim = 0cm 1cm 0cm 1cm, clip]{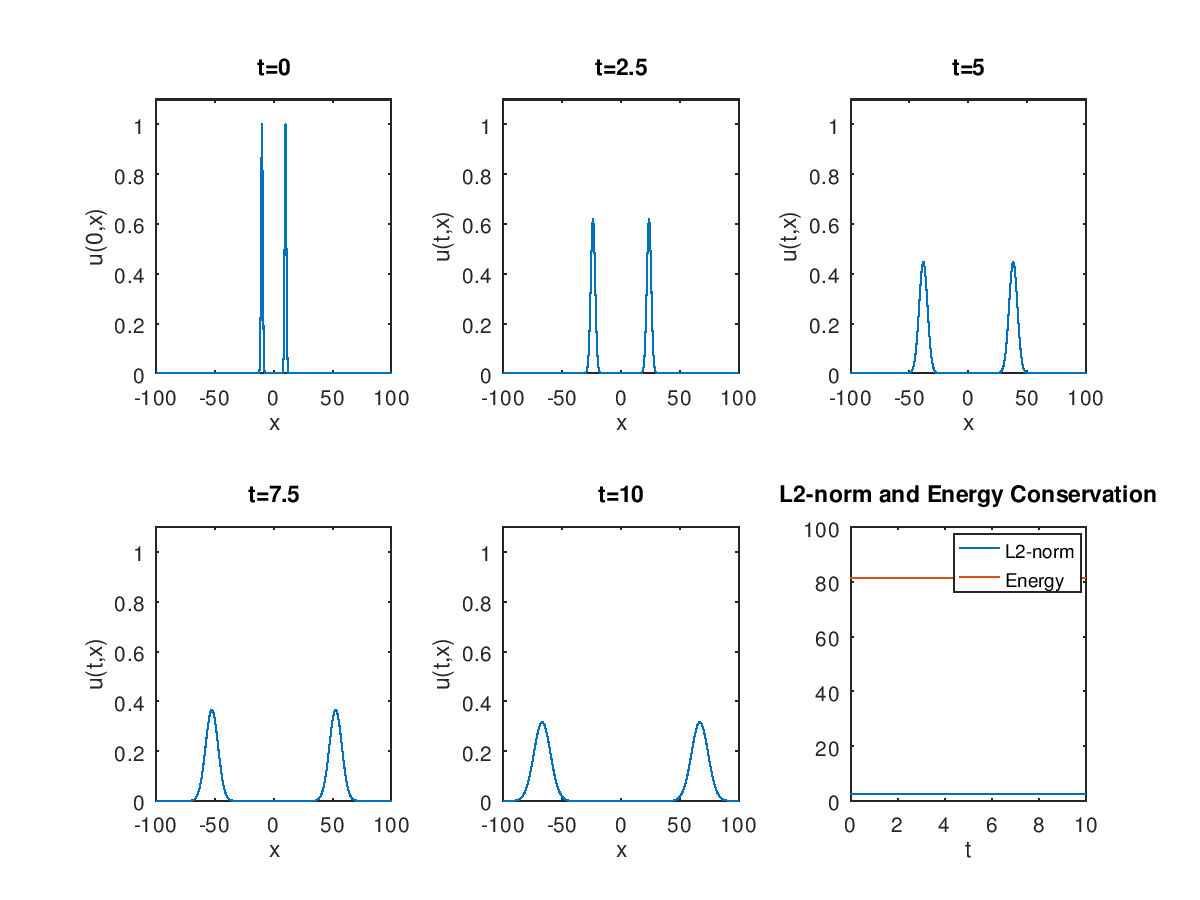}	
	\caption{Solution of equation \eqref{NMeq} with initial datum $\varphi_0$ in the free case ($\lambda=0$, $\mu=0$).}
	\label{fig:2_gaussian_free}
\end{figure}

\begin{figure}[p]
	\centering
		\includegraphics[width=0.85\textwidth,trim = 0cm 1cm 0cm 1cm, clip]{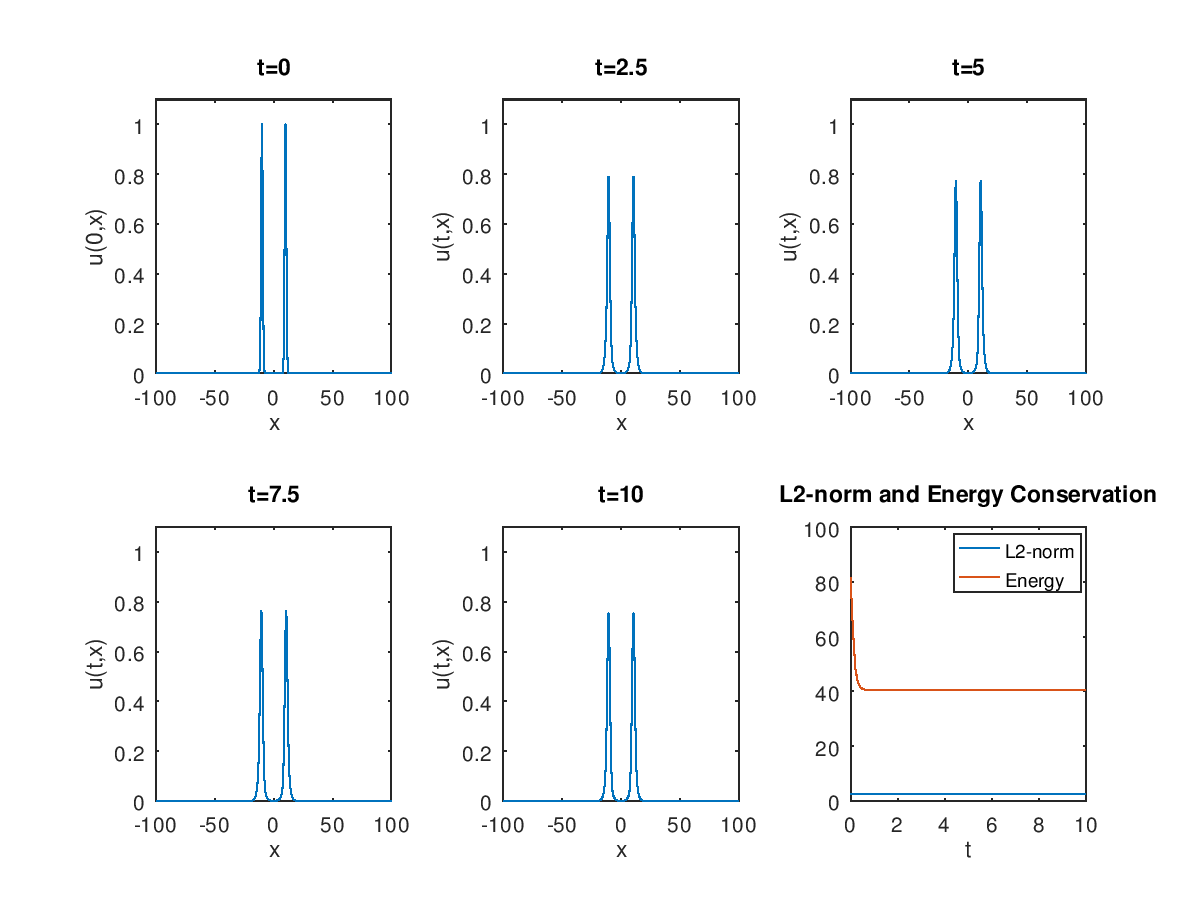}	
	\caption{Solution of equation \eqref{NMeq} with initial datum $\varphi_0$ in the focusing case ($\lambda=-0.1$, $\mu=10$).}
	\label{fig:2_gaussian_mu}
\end{figure}

\end{document}